 \newtheorem{thm}{Theorem}[section]
 \newtheorem{lem}[thm]{Lemma}
 \newtheorem{prop}[thm]{Proposition}
 \numberwithin{equation}{section}
\begin{document}

\title[Quantum Ergodicity for a Point Scatterer on the 3D Torus]
{Quantum Ergodicity for a Point Scatterer on the Three-Dimensional
Torus}

\author{Nadav Yesha}
\begin{abstract}
Consider a point scatterer (the Laplacian perturbed by a delta-potential)
on the standard three-dimensional flat torus. Together with the eigenfunctions
of the Laplacian which vanish at the point, this operator has a set
of new, perturbed eigenfunctions. In a recent paper, the author was
able to show that all of the perturbed eigenfunctions are uniformly
distributed in configuration space. In this paper we prove that almost
all of these eigenfunctions are uniformly distributed in phase space,
i.e. we prove quantum ergodicity for the subspace of the perturbed
eigenfunctions. An analogue result for a point scatterer on the two-dimensional
torus was recently proved by Kurlberg and Ueberschär.
\end{abstract}

\address{Raymond and Beverly Sackler School of Mathematical Sciences, Tel
Aviv University, Tel Aviv 69978, Israel}

\email{nadavye1@post.tau.ac.il}

\maketitle

\section{Introduction}

Consider a point scatterer on the standard three-dimensional flat
torus $\mathbb{T}^{3}=\mathbb{R}^{3}/2\pi\mathbb{Z}^{3}$, which is
formally given by
\begin{equation}
-\Delta+\alpha\delta_{x_{0}}\label{eq:Scatterer}
\end{equation}
where $-\Delta$ is the associated Laplacian on $\mathbb{T}^{3}$,
$\delta_{x_{0}}$ is the Dirac potential at $x_{0}$ and $\alpha$
is a coupling parameter.

We want to study quantum ergodicity of this system, which is a key
question in the field of Quantum Chaos. A classical result regarding
quantum ergodicity is Schnirelman's theorem \cite{Schnirelman,Colin2,Zelditch},
which asserts that for classically ergodic systems, the quantum counterpart
is quantum ergodic, i.e. almost all eigenstates are uniformly distributed
in phase space. However our system is not classically ergodic but
an intermediate system; its classical dynamics is nearly integrable.

Rigorously, a point scatterer is obtained as a self-adjoint extension
of the Laplacian acting on functions vanishing near $x_{0}.$ Such
extensions are parametrized by $\phi\in\left(-\pi,\pi\right]$, where
$\phi=\pi$ corresponds to the standard Laplacian ($\alpha=0$ in
\eqref{eq:Scatterer}). For $\phi\neq\pi,$ the eigenfunctions of
the corresponding operator consist of eigenfunctions of the Laplacian
which vanish at $x_{0}$ (and are not affected by the scatterer, so
they are related to the unperturbed problem), and new, perturbed eigenfunctions.
Since the latter are the only eigenfunctions which feel the scatterer,
we have to consider only them in order to study the physics of the
perturbed problem.

In a recent paper \cite{Yesha}, the author was able to prove quantum
(unique) ergodicity regarding the perturbed eigenfunctions of a point
scatterer on the standard three-dimensional flat torus, but only for
observables which do not depend on the momentum. Our goal now is to
prove quantum ergodicity regarding the perturbed eigenfunctions in
full phase space.

We remark that a two-dimensional analogue of the theorem in \cite{Yesha}
(i.e. quantum ergodicity in configuration space) was proved for a
general two-dimensional flat torus by Rudnick and Ueberschär \cite{Rudnick};
in a recent work by Kurlberg and Ueberschär, they prove an analogue
of the theorem in the current paper for the standard two-dimensional
flat torus.

The unit (co)tangent bundle of $\mathbb{T}^{3}$ is the compact metric
space $S^{*}\mathbb{T}^{3}\simeq\mathbb{T}^{3}\times S^{2}$, on which
we have the Liouville probability measure $\mu$ which is the normalized
product of the Lebesgue measure $m$ on $\mathbb{T}^{3}$ and the
Lebesgue measure $\sigma$ on $S^{2}.$ Observables are smooth, zero-th
order $\xi$-homogeneous functions $a\left(x,\xi\right)$ on $S^{*}\mathbb{T}^{3}$.
In order to quantize them, we use the notion of pseudo-differential
operators on $\mathbb{T}^{3}$, which will be discussed in greater
detail in section §\ref{sec:PDO} below. As we will see, quantizing
$a\in C^{\infty}\left(S^{*}\mathbb{T}^{3}\right)$ leads to an operator
$\mbox{Op}\left(a\right)$, which is a bounded operator on $L^{2}\left(\mathbb{T}^{3}\right).$

We now state the main theorem of this paper. For every $\phi\in\left(-\pi,\pi\right)$,
let $\Lambda_{\phi}$ be the set of the perturbed eigenvalues of the
point scatterer, with the corresponding $L^{2}$-normalized eigenfunctions
$g_{\lambda}$ $\left(\lambda\in\Lambda_{\phi}\right)$. We prove:
\begin{thm}
Fix $\phi\in\left(-\pi,\pi\right)$. There is a subset $\Lambda_{\phi,\infty}\subseteq\Lambda_{\phi}$
of density one so that for all \textup{$a\in C^{\infty}\left(S^{*}\mathbb{T}^{3}\right)$,
\[
\left\langle \mbox{Op}\left(a\right)g_{\lambda},g_{\lambda}\right\rangle \to\int_{S^{*}\mathbb{T}^{3}}a\mbox{d}\mu
\]
}as $\lambda\to\infty$ along $\Lambda_{\phi,\infty}.$
\end{thm}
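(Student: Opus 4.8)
The plan is as follows. Placing the scatterer at the origin (by translation invariance), recall that the perturbed eigenfunctions are the $L^{2}$-normalized Green's functions $g_{\lambda}=c_{\lambda}\sum_{n\in\mathbb Z^{3}}e^{i\langle n,x\rangle}/(|n|^{2}-\lambda)$ $(\lambda\in\Lambda_{\phi})$ --- this is where dimension three matters, since $\sum_{n}|n|^{-4}<\infty$ makes the Green's function square-integrable --- and that the perturbed eigenvalues interlace the distinct Laplace eigenvalues $|n|^{2}$, which are exactly the sums of three squares. Enumerating the latter as $m_{1}<m_{2}<\cdots$ and writing $r_{3}(m)=\#\{n\in\mathbb Z^{3}:|n|^{2}=m\}$, there is exactly one $\lambda_{j}\in(m_{j},m_{j+1})$ for each $j$, the gap satisfies $m_{j+1}-m_{j}\le 3$, and $|c_{\lambda}|^{-2}$ is proportional to $\sum_{m}r_{3}(m)(m-\lambda)^{-2}$. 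A short Fourier computation gives, for $a(x,\xi)=e^{i\langle k,x\rangle}\psi(\xi)$ with $k\in\mathbb Z^{3}$ and $\psi\in C^{\infty}(S^{2})$, that $\langle\mathrm{Op}(a)g_{\lambda},g_{\lambda}\rangle$ equals $|c_{\lambda}|^{2}$ times a fixed constant times $\sum_{n\in\mathbb Z^{3}}\psi(n/|n|)\big[(|n|^{2}-\lambda)(|n+k|^{2}-\lambda)\big]^{-1}$. By linearity, Stone--Weierstrass on $\mathbb T^{3}\times S^{2}$, the uniform bound $|\langle\mathrm{Op}(a)g_{\lambda},g_{\lambda}\rangle|\le C\|a\|_{C^{N}}$ from the boundedness of $\mathrm{Op}(a)$, and the usual diagonalization over a countable dense family of observables, it suffices to prove for each fixed such $a$ that $\langle\mathrm{Op}(a)g_{\lambda},g_{\lambda}\rangle\to\int_{S^{*}\mathbb T^{3}}a\,d\mu$ along some subset of $\Lambda_{\phi}$ of density one.

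For $k\neq 0$ one expects convergence (to $0$) for \emph{every} $\lambda$: pulling $\|\psi\|_{\infty}$ out after taking absolute values, it is enough to show $\sum_{n}\big|(|n|^{2}-\lambda)(|n+k|^{2}-\lambda)\big|^{-1}=o\big(|c_{\lambda}|^{-2}\big)$. For $\psi\equiv 1$ this is precisely the configuration-space quantum (unique) ergodicity of \cite{Yesha}; the underlying lattice-point estimates --- the lattice points lying close to both spheres $|n|^{2}\approx\lambda$ and $|n+k|^{2}\approx\lambda$ cluster near the circle in which these spheres meet and so are far sparser than those near a single sphere --- bound the sum in absolute value and hence apply verbatim with the harmless bounded factor $\psi(n/|n|)$.

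The substance of the theorem is the diagonal case $a=\psi(\xi)$, where the matrix element equals
\[
\frac{\sum_{m}(m-\lambda)^{-2}\sum_{|n|^{2}=m}\psi(n/|n|)}{\sum_{m}r_{3}(m)(m-\lambda)^{-2}}=\overline{\psi}+\frac{\sum_{m}E_{\psi}(m)(m-\lambda)^{-2}}{\sum_{m}r_{3}(m)(m-\lambda)^{-2}},
\]
with $\overline{\psi}$ the average of $\psi$ over $S^{2}$ and $E_{\psi}(m)=\sum_{|n|^{2}=m}\psi(n/|n|)-\overline{\psi}\,r_{3}(m)$ the discrepancy of the lattice points on the sphere $|n|^{2}=m$; always $|E_{\psi}(m)|\le C_{\psi}r_{3}(m)$. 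I would invoke Duke's theorem (in the Golubeva--Fomenko form for three variables) to produce a \emph{power-saving sparse} set $\mathcal E\subset\mathbb N$, $\#(\mathcal E\cap[1,X])\ll X^{1-\eta}$, with $|E_{\psi}(m)|\ll_{\psi}m^{1/2-\delta}$ for $m\notin\mathcal E$, and then enlarge $\mathcal E$ by the (also power-saving sparse) sets $\{m:r_{3}(m)<m^{1/2-\epsilon_{0}}\}$ and $\{m:r_{3}(m)>m^{1/2+\epsilon_{0}}\}$. Put $\Lambda_{\phi,\infty}=\{\lambda_{j}:\ m_{j+r}\notin\mathcal E\ \text{for all}\ |r|\le R_{j}\}$ with $R_{j}=\lfloor\lambda_{j}^{\theta}\rfloor$; since $\mathcal E$ is power-saving sparse this removes only $\ll J^{\,1+\theta-\eta}=o(J)$ of the first $J$ indices once $\theta<\eta$, so $\Lambda_{\phi,\infty}$ has density one. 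On $\Lambda_{\phi,\infty}$ the neighbours $m_{j},m_{j+1}$ are of typical size, so (using $m_{j+1}-m_{j}\le 3$) $\sum_{m}r_{3}(m)(m-\lambda)^{-2}\gg r_{3}(m_{j})(m_{j}-\lambda)^{-2}\gg\lambda^{1/2-\epsilon_{0}}$, and one splits the error numerator. The part from $m\notin\mathcal E$ is $\ll_{\psi}\lambda^{1/2-\delta}\big((\lambda-m_{j})^{-2}+(m_{j+1}-\lambda)^{-2}+1\big)$; dividing by the denominator, whose corresponding lower bounds carry the same singular factors, gives $O(\lambda^{\epsilon_{0}-\delta})$. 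The part from $m\in\mathcal E$ is $\ll_{\psi}\lambda^{1/2+\epsilon_{0}}\sum_{|r|>R_{j}}r^{-2}\ll\lambda^{1/2+\epsilon_{0}-\theta}$, because on $\Lambda_{\phi,\infty}$ every exceptional sphere lies at distance $\gg R_{j}$ from $\lambda_{j}$ (consecutive sums of three squares differ by $\ge 1$); dividing by $\gg\lambda^{1/2-\epsilon_{0}}$ gives $O(\lambda^{2\epsilon_{0}-\theta})$. Choosing $\epsilon_{0}<\min(\delta,\eta/2)$ and $\theta\in(2\epsilon_{0},\eta)$ makes both errors tend to $0$ along $\Lambda_{\phi,\infty}$, which settles the diagonal case.

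The main obstacle will be precisely this diagonal step. It forces one to combine the quantitative (power-saving) equidistribution of lattice points on spheres with enough arithmetic control of the perturbed eigenvalues --- the interlacing, the gap bound $m_{j+1}-m_{j}\le 3$, and a density-one lower bound of order $\lambda^{1/2-\epsilon_{0}}$ for the normalizing sum $|c_{\lambda}|^{-2}$ --- and then to assemble the several sparse exceptional sets (for the sphere discrepancies and for the atypical values of $r_{3}$) into a single subset of $\Lambda_{\phi}$ of density one along which the convergence holds for every observable at once. By contrast, the off-diagonal contribution should follow with little extra work from the methods of \cite{Yesha}.
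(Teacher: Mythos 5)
Your proposal is correct and follows essentially the same route as the paper: reduce to observables $Y_{l,m}(\xi)e^{i\zeta\cdot x}$, dispose of the off-diagonal case $\zeta\neq 0$ by taking absolute values and quoting the lattice-point estimates of \cite{Yesha}, and handle the diagonal case by combining Duke's bound on the Weyl sums with Siegel's lower bound for $r_{3}$, excising a sparse set of eigenvalues whose neighbouring spheres have degenerate $r_{3}$. The one place where you work harder than necessary is the density-one set: in three dimensions Duke's estimate is deterministic ($W_{l,m}(n)=W_{l,m}(n_{1})\ll n_{1}^{13/28+\varepsilon}$ for every admissible $n=4^{a}n_{1}$, with no exceptional set), and the only degeneracy comes from large powers of $4$, so the paper needs only the single nearest sphere $n_{\lambda}$ to satisfy $n_{1}>n_{\lambda}^{1/2}$ rather than your window of $\lambda^{\theta}$ good neighbours (whose sparsity, note, already tacitly invokes Siegel through the set $\{m:r_{3}(m)<m^{1/2-\epsilon_{0}}\}$); your more elaborate construction is what one would need in the two-dimensional Erd\H{o}s--Hall situation, and it does still yield density one here.
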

We actually prove a more general statement: let $\mathcal{N}_{3}$
be the set of integers which are sums of three squares (these are
the eigenvalues of the Laplacian), and let $\Lambda$ be any increasing
sequence whose elements interlace with the elements of $\mathcal{N}_{3}$.
For any $\lambda\in\Lambda$, define $g_{\lambda}$ to be the $L^{2}$-normalized
Green's function: $g_{\lambda}=\frac{G_{\lambda}}{\left\Vert G_{\lambda}\right\Vert _{2}}$,
where $G_{\lambda}=\left(\Delta+\lambda\right)^{-1}\delta_{x_{0}}$.
\begin{thm}
\label{thm:MainTheroem}There is a subset $\Lambda_{\infty}\subseteq\Lambda$
of density one so that for all \textup{$a\in C^{\infty}\left(S^{*}\mathbb{T}^{3}\right)$,
\[
\left\langle \mbox{Op}\left(a\right)g_{\lambda},g_{\lambda}\right\rangle \to\int_{S^{*}\mathbb{T}^{3}}a\mbox{d}\mu
\]
}as $\lambda\to\infty$ along $\Lambda_{\infty}.$
\end{thm}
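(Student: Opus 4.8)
The plan is to expand everything in the exponential basis $e_\xi(x)=e^{i\langle\xi,x\rangle}$, $\xi\in\mathbb{Z}^3$. Up to an irrelevant unimodular constant one has $G_\lambda=\sum_{\xi\neq0}\frac{e^{-i\langle\xi,x_0\rangle}}{|\xi|^2-\lambda}\,e_\xi$, so that $\|G_\lambda\|_2^{2}=\sum_{n\in\mathcal{N}_3}\frac{r_3(n)}{(n-\lambda)^2}$, where $r_3(n)=\#\{\xi\in\mathbb{Z}^3:|\xi|^2=n\}$. Writing $a(x,\xi)=\sum_{\zeta}\hat a_\zeta(\xi/|\xi|)e^{i\langle\zeta,x\rangle}$ and using the defining property of the quantization of \S\ref{sec:PDO}, namely $\langle\mathrm{Op}(a)e_\xi,e_{\xi'}\rangle=\hat a_{\xi'-\xi}(\xi/|\xi|)$ for $\xi\neq0$ (the cutoff near $\xi=0$ contributing $O(\|G_\lambda\|_2^{-2})$), one obtains
\[
\langle\mathrm{Op}(a)g_\lambda,g_\lambda\rangle=\frac{1}{\|G_\lambda\|_2^{2}}\sum_{\zeta\in\mathbb{Z}^3}e^{i\langle\zeta,x_0\rangle}\sum_{\xi\neq0}\frac{\hat a_\zeta(\xi/|\xi|)}{(|\xi|^2-\lambda)(|\xi+\zeta|^2-\lambda)}.
\]
Since $a$ is smooth, $\|\hat a_\zeta\|_{C^0(S^2)}$ decays faster than any power of $|\zeta|$, and the operator norm of $\mathrm{Op}(a)-\mathrm{Op}(a_M)$, where $a_M$ retains only the modes $|\zeta|\le M$, is at most $\sum_{|\zeta|>M}\|\hat a_\zeta\|_{C^0(S^2)}\to0$; hence it suffices to analyze the diagonal term $\zeta=0$ and, for each fixed $M$, the finitely many off-diagonal terms $\zeta\neq0$ with $|\zeta|\le M$, and then let $M\to\infty$. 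Two arithmetic facts will be used repeatedly: Siegel's bound $r_3(n)\gg_\varepsilon n^{1/2-\varepsilon}$ on $\mathcal{N}_3$ (which gives $\|G_\lambda\|_2^{2}\ge r_3(n_0)/(n_0-\lambda)^2\gg_\varepsilon\lambda^{1/2-\varepsilon}$ for the nearest $n_0\in\mathcal{N}_3$), and the fact that consecutive elements of $\mathcal{N}_3$ are $O(1)$ apart, so that $|n_0-\lambda|=O(1)$.

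The off-diagonal terms I expect to tend to $0$ for \emph{every} $\lambda\to\infty$ along $\Lambda$. Fix $\zeta\neq0$ and split the $\xi$-sum into three ranges. If $|\xi|^2\notin[\lambda/2,2\lambda]$, both denominators are $\gg|\xi|^2$ and $\sum_{|\xi|^2\notin[\lambda/2,2\lambda]}|\xi|^{-4}\ll\lambda^{-1/2}$, which upon division by $\|G_\lambda\|_2^{2}\gg_\varepsilon\lambda^{1/2-\varepsilon}$ is $o(1)$. If $|\xi+\zeta|^2=|\xi|^2$ (the ``resonant'' $\xi$, which lie in a fixed rational plane), the summand is $\hat a_\zeta(\xi/|\xi|)(|\xi|^2-\lambda)^{-2}$; the number of such $\xi$ with $|\xi|^2=n$ is $\ll_{\zeta,\varepsilon}n^{\varepsilon}$ (values of a positive definite binary quadratic form), so this part is $\ll_{\zeta,\varepsilon}\lambda^{\varepsilon}\big(1+(n_0-\lambda)^{-2}\big)$, and dividing by $\|G_\lambda\|_2^{2}\ge r_3(n_0)(n_0-\lambda)^{-2}\gg_\varepsilon n_0^{1/2-\varepsilon}(n_0-\lambda)^{-2}$ and using $|n_0-\lambda|=O(1)$ gives $\ll_{\zeta,\varepsilon}n_0^{-1/2+2\varepsilon}\to0$. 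For the remaining $\xi$ in $[\lambda/2,2\lambda]$ with $D(\xi):=|\xi+\zeta|^2-|\xi|^2\neq0$, apply partial fractions $\frac{1}{(|\xi|^2-\lambda)(|\xi+\zeta|^2-\lambda)}=\frac{1}{D(\xi)}\big(\frac{1}{|\xi|^2-\lambda}-\frac{1}{|\xi+\zeta|^2-\lambda}\big)$; since $\#\{\xi:|\xi|^2=n,\ D(\xi)=d\}\ll_{\zeta,\varepsilon}n^{\varepsilon}$ with $1\le|d|\ll_\zeta\sqrt n$, the sum of $1/|D(\xi)|$ over one sphere is $\ll_{\zeta,\varepsilon}n^{\varepsilon}\log n$, while summing $|n-\lambda|^{-1}$ over $n\in\mathcal{N}_3$ near $\lambda$ costs $\ll\log\lambda+|n_0-\lambda|^{-1}$; dividing by $\|G_\lambda\|_2^{2}$ as above gives $\ll_{\zeta,\varepsilon}\lambda^{-1/2+3\varepsilon}\log\lambda\to0$.

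For the diagonal term, $\zeta=0$, we have $\|G_\lambda\|_2^{-2}\sum_{n\in\mathcal{N}_3}(n-\lambda)^{-2}F(n)$ with $F(n)=\sum_{|\xi|^2=n}\hat a_0(\xi/\sqrt n)$ and $\hat a_0(\theta)=\int_{\mathbb{T}^3}a(x,\theta)\,\mathrm dx$. The crucial input here is the equidistribution of lattice points on spheres (Duke; Golubeva--Fomenko; Iwaniec): $F(n)=r_3(n)\big(\bar a+o(1)\big)$ as $n\to\infty$ through $\mathcal{N}_3$, with $\bar a=\int_{S^*\mathbb{T}^3}a\,\mathrm d\mu=\int_{S^2}\hat a_0\,\mathrm d\sigma$. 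Writing $F(n)/r_3(n)=\bar a+\varepsilon_a(n)$, $\varepsilon_a(n)\to0$, the diagonal term equals $\bar a+\sum_{n}w_n^{(\lambda)}\varepsilon_a(n)$, where $w_n^{(\lambda)}=\frac{r_3(n)(n-\lambda)^{-2}}{\|G_\lambda\|_2^{2}}$ is a probability weight concentrated, up to a rapidly decaying tail, near $\lambda$, hence on large integers. To turn this into convergence and, simultaneously, to produce a single density-one subset $\Lambda_\infty\subseteq\Lambda$ valid for \emph{all} $a$, I would prove a mean-square estimate $\frac{1}{N(X)}\sum_{\lambda\in\Lambda,\,\lambda\le X}\big|\langle\mathrm{Op}(a)g_\lambda,g_\lambda\rangle-\bar a\big|^2\to0$, $N(X)=\#\{\lambda\in\Lambda:\lambda\le X\}$, for each member of a countable dense family of observables: by Jensen's inequality $\big|\sum_n w_n^{(\lambda)}\varepsilon_a(n)\big|^2\le\sum_n w_n^{(\lambda)}|\varepsilon_a(n)|^2$, so the estimate reduces to controlling $\sum_{\lambda\le X}w_n^{(\lambda)}$ (via $r_3(n)\gg_\varepsilon n^{1/2-\varepsilon}$, $r_3(n)\ll_\varepsilon n^{1/2+\varepsilon}$, and the interlacing) and the mean value $\sum_{n\le X}|\varepsilon_a(n)|^2=o(X)$ of the squared equidistribution defect; the off-diagonal part, being $o(1)$ already pointwise, contributes nothing. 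A standard diagonalization argument (as in Zelditch's proof of Schnirelman's theorem, cf.\ \cite{Zelditch}) then yields $\Lambda_\infty$ of density one along which $\langle\mathrm{Op}(a)g_\lambda,g_\lambda\rangle\to\bar a$ for every $a\in C^\infty(S^*\mathbb{T}^3)$.

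The main difficulty lies in the diagonal term, and precisely in the tension between the \emph{arbitrariness} of the interlacing sequence $\Lambda$ and the arithmetic of $\mathcal{N}_3$: $\lambda$ may be placed very close to a single $n_0\in\mathcal{N}_3$, in which case $w^{(\lambda)}$ concentrates almost entirely on $n_0$ and the matrix element becomes essentially the phase-space average of a single Laplace eigenfunction living on the sphere $|\xi|^2=n_0$, so one is forced to invoke equidistribution of lattice points on that sphere for as large a family of $n_0$ as possible — including non-squarefree ones. Establishing the mean-value bound $\sum_{n\le X}|\varepsilon_a(n)|^2=o(X)$ with enough uniformity in $a$, and arranging the variance estimate so that it absorbs the $\lambda^{\pm\varepsilon}$ fluctuations of $r_3(n)$, is where the real work is; this is also what accounts for the appearance of a density-one set rather than the full sequence. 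The off-diagonal terms, by contrast, are controlled unconditionally once Siegel's bound is in hand, and the truncation in $\zeta$ together with the final extraction of $\Lambda_\infty$ are routine.
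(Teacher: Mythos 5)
Your overall skeleton --- Fourier decomposition in $x$, separation into a diagonal ($\zeta=0$) and off-diagonal terms, Siegel's bound for $\left\Vert G_{\lambda}\right\Vert _{2}^{2}$, and equidistribution of lattice points on spheres for the diagonal term --- matches the paper, and your off-diagonal treatment (partial fractions plus divisor-type bounds on $\#\{\xi:\left|\xi\right|^{2}=n,\ D(\xi)=d\}$) is a plausible substitute for the paper's appeal to Proposition 3.9 of \cite{Yesha}. The genuine gap is in the diagonal term. You assert $F(n)=r_{3}(n)\left(\bar{a}+o(1)\right)$ as $n\to\infty$ through $\mathcal{N}_{3}$; this is false. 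Equidistribution holds for $n\not\equiv0,4,7\ (8)$, but if $n=4^{a}n_{1}$ with $4\nmid n_{1}$ the lattice points on $\left|\xi\right|^{2}=n$ are exactly the dilates by $2^{a}$ of those on $\left|\xi\right|^{2}=n_{1}$, so for $n$ divisible by a large power of $4$ with $n_{1}$ small (e.g.\ $n=4^{a}$, which carries only the six points $(\pm2^{a},0,0)$ and permutations) there is no equidistribution and $\varepsilon_{a}(n)\not\to0$. Since $\Lambda$ is an arbitrary interlacing sequence, $\lambda$ may sit at distance $O(1)$ from such an $n$; the weight $w^{(\lambda)}$ then concentrates on it and your diagonal term does not converge to $\bar{a}$. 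You flag this issue ("including non-squarefree ones") but leave its resolution, together with the mean-value bound $\sum_{n\leq X}\left|\varepsilon_{a}(n)\right|^{2}=o(X)$ and the absorption of the $\lambda^{\pm\varepsilon}$ factors, as "the real work" --- so the argument is incomplete at exactly the point where the density-one set must be produced.

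The paper's route through the diagonal term is different and dispenses with the variance/diagonalization machinery entirely. It expands the observable further in spherical harmonics and uses Duke's \emph{quantitative} bound $\left|\sum_{\left|\xi\right|^{2}=n}Y_{l,m}\left(\xi/\left|\xi\right|\right)\right|\ll n_{1}^{13/28+\varepsilon}$, which together with Siegel's $r_{3}(n)=r_{3}(n_{1})\gg n_{1}^{1/2-\varepsilon}$ gives a deterministic power saving $n_{1}^{-1/28+\varepsilon}$ for \emph{every individual} $n$; no averaging over $n$ or $\lambda$ and no Zelditch-style extraction are needed (this is precisely the 3D advantage over the non-deterministic Erdős--Hall input in 2D, as the introduction points out). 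The only residual loss is when $n_{1}$ is much smaller than $n$, and the paper defines $\Lambda_{\infty}$ explicitly as the set of $\lambda$ whose nearest $n_{\lambda}=4^{a}n_{1}$ satisfies $n_{1}>n_{\lambda}^{1/2}$; a one-line count ($\leq X^{1/2}\log X$ bad $n$ up to $X$) shows the complement has density zero. To salvage your approach you would either have to import this good/bad dichotomy into the definition of $\varepsilon_{a}(n)$ --- at which point the variance argument becomes superfluous --- or prove the $o(X)$ mean-value bound over all of $\mathcal{N}_{3}$, which again forces you to count the $n$ divisible by high powers of $4$, i.e.\ to reprove the paper's counting lemma for $\mathcal{N}_{bad}$.
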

This is of interest since in the physics literature one considers
$\phi$ which is not fixed, but varies as $\lambda\to\infty$ (see
\cite{Shigehara,Shigehara2}); since the only condition for $\Lambda$
in theorem \ref{thm:MainTheroem} is that its elements interlace with the
elements of $\mathcal{N}_{3}$, the theorem will still hold in such
cases.

Approximating an observable with a linear combination of the functions
$Y_{l,m}\left(\xi\right)e^{ix\cdot\zeta}$, where $Y_{l,m}$ are spherical
harmonics, the question at hand reduces to an arithmetic one. The
main case is $\zeta=0$ (the other cases will follow from the proof
of proposition 3.9 in \cite{Yesha}), for which a new arithmetic ingredient
is used -- an estimate due to Duke, by which he proved in \cite{Duke,Duke2}
the equidistribution of integer lattice points on a sphere of radius
$\sqrt{n}$, $n\not\equiv0,4,7\,\left(8\right),\, n\to\infty$ (conjectured
by Linnik, and proved independently by Golubeva and Fomenko \cite{Golubeva,Golubeva2}).
It will be combined with Siegel's lower bound for $r_{3}\left(n\right)$,
the number of representations of $n$ as a sum of three squares ($n\not\equiv0,4,7\,\left(8\right)$)
\cite{Siegel}, which was also a key ingredient in proving the analogue
result in configuration space in \cite{Yesha}.

We comment that all this is very different from the two-dimensional
case, where arithmetic questions about spheres are replaced by questions
about circles. The two-dimensional analogue to Duke's estimate is
the theorem of Erdős and Hall \cite{Erdos} about the distribution
of lattice points on circles; an obvious complication constructing
the density one sequence $\Lambda_{\infty}$ of perturbed eigenvalues
is due to the fact that the theorem of Erdős and Hall is not deterministic,
i.e. lattice points on circles are equidistributed only for an unspecified
density one set of compatible $n$'s.

\subsection*{Acknowledgments:}

This work is part of the author's Ph.D. thesis written under the supervision
of Zeev Rudnick at Tel Aviv University. The author would like to thank
him for suggesting the problem and for his useful ideas, discussions
and comments. The author would also like to thank Stéphane Nonnenmacher
for discussions concerning pseudo-differential operators.

The research leading to these results has received funding from the
European Research Council under the European Union's Seventh Framework
Programme (FP7/2007-2013) / ERC grant agreement nº 320755.

\section{Point Scatterers on the Torus}

Let $\mathbb{T}^{3}=\mathbb{R}^{3}/2\pi\mathbb{Z}^{3}$ be the standard
flat torus. A rigorous definition of the operator \eqref{eq:Scatterer}
can be found in \cite{Yesha}, following \cite{Colin1,Rudnick}. For
the convenience of the reader we give here a brief summary:

Let $D_{0}=C_{0}^{\infty}\left(\mathbb{T}^{3}\setminus\left\{ x_{0}\right\} \right)$
be the domain of $C^{\infty}$ functions vanishing in a neighborhood
of $x_{0}$, and define an operator on $L^{2}\left(\mathbb{T}^{3}\right)$
by $-\Delta_{x_{0}}=-\Delta_{|D_{0}}$. For the adjoint of $-\Delta_{x_{0}}$
we have
\begin{align*}
\mbox{Dom}\left(-\Delta_{x_{0}}^{*}\right) & =H^{2}\left(\mathbb{T}^{3}\setminus\left\{ x_{0}\right\} \right)\\
 & =\left\{ f\in L^{2}\left(\mathbb{T}^{3}\right):\,\exists A\in\mathbb{C},\,-\Delta f+A\delta_{x_{0}}\in L^{2}\left(\mathbb{T}^{3}\right)\right\} ,
\end{align*}
and the self-adjoint extensions of $-\Delta_{x_{0}}$ are indexed
by a parameter $\phi\in(-\pi,\pi]$; the domain of the corresponding
operators $-\Delta_{\phi,x_{0}}$ contains the functions $f\in\mbox{Dom}\left(-\Delta_{x_{0}}^{*}\right)$
such that
\[
\exists a\in\mathbb{C},\, f\left(x\right)=a\left(\cos\frac{\phi}{2}\cdot\frac{-1}{4\pi\left|x-x_{0}\right|}+\sin\frac{\phi}{2}\right)+o\left(1\right),\hspace{1em}x\to x_{0}
\]
and their action on $f\in\mbox{Dom}\left(-\Delta_{\phi,x_{0}}\right)$
is given by
\begin{equation}
-\Delta_{\phi,x_{0}}f=-\Delta f+A\delta_{x_{0}}=-\Delta f+a\cos\frac{\phi}{2}\delta_{x_{0}}\label{eq:ScattererFormal}
\end{equation}
so we define a point scatterer to be one of these extended operators
$-\Delta_{\phi,x_{0}}$.

We remark that for $\phi=\pi$ we have $\mbox{Dom}\left(-\Delta_{\pi,x_{0}}\right)=H^{2}\left(\mathbb{T}^{3}\right)$
and $-\Delta_{\pi,x_{0}}f=-\Delta f$, so this extension recovers
the standard Laplacian $-\Delta_{\infty}$ on the domain $H^{2}\left(\mathbb{T}^{3}\right)$
(which is the unique self-adjoint extension of $-\Delta_{|C^{\infty}\left(\mathbb{T}^{3}\right)})$.

The functions $\left(2\pi\right)^{-3/2}e^{i\xi\cdot x}$ $\left(\xi\in\mathbb{Z}^{3}\right)$
form an orthonormal basis of eigenfunctions of $-\Delta_{\infty}$
for $L^{2}\left(\mathbb{T}^{3}\right)$. The corresponding eigenvalues
are the norms $\left|\xi\right|^{2}$ of vectors in $\mathbb{Z}^{3}$,
i.e. the set $\mathcal{N}_{3}$ of integers which are sums of three
squares, and each eigenvalue is of multiplicity $r_{3}\left(n\right)$
which is the number of representations of $n=a^{2}+b^{2}+c^{2}$ with
$a,b,c\in\mathbb{Z}$ integers.

For the perturbed operator \eqref{eq:ScattererFormal} with $\phi\neq\pi$
we still have the nonzero eigenvalues from the unperturbed problem
$\left(0\neq\lambda\in\sigma\left(-\Delta_{\infty}\right)\right)$,
with multiplicities decreased by one, as well as a new set $\Lambda=\Lambda_{\phi}$
of eigenvalues (referred as the perturbed eigenvalues), each appearing
with multiplicity one, with the corresponding eigenfunctions being
multiples of the Green's function
\[
G_{\lambda}\left(x,x_{0}\right)=\left(\Delta+\lambda\right)^{-1}\delta_{x_{0}}.
\]
The main tool we use for studying the Green's functions $G_{\lambda}$
is their $L^{2}$-expansion, which is given by
\[
G_{\lambda}(x,x_{0})=-\frac{1}{8\pi^{3}}\sum\limits _{\xi\in\mathbb{Z}^{3}}\frac{\exp\left(i\xi\cdot\left(x-x_{0}\right)\right)}{\left|\xi\right|^{2}-\lambda}.
\]
We denote by
\[
g_{\lambda}\left(x\right):=\frac{G_{\lambda}\left(x,x_{0}\right)}{\left\Vert G_{\lambda}\right\Vert _{2}}
\]
the $L^{2}$-normalized Green's function, or the normalized perturbed
eigenfunctions of the scatterer.

One can see that perturbed eigenvalues are the solutions to the equation
\begin{equation}
\sum_{\xi\in\mathbb{Z}^{3}}\left\{ \frac{1}{\left|\xi\right|^{2}-\lambda}-\frac{\left|\xi\right|^{2}}{\left|\xi\right|^{4}+1}\right\} =c_{0}\tan\frac{\phi}{2}\label{eq:eigenvalues}
\end{equation}
with
\[
c_{0}=\sum_{\xi\in\mathbb{Z}^{3}}\frac{1}{\left|\xi\right|^{4}+1}
\]
so the perturbed eigenvalues interlace with the elements of
\[
\mathcal{N}_{3}=\left\{ 0=n_{0}<n_{1}<n_{2}<\dots\right\} ,
\]
and denoting them by $\lambda_{k}=\lambda_{k}^{\phi}$ we can write
\[
\lambda_{0}<n_{0}<\lambda_{1}<n_{1}<\lambda_{2}<\dots<n_{k}<\lambda_{k+1}<n_{k+1}<\dots.
\]
Recall that a subset $\Lambda'=\left\{ \lambda_{j_{k}}\right\} \subseteq\Lambda$
is of density $a$ ($0\leq a\leq1)$ in $\Lambda$ if
\[
\lim_{J\to\infty}\frac{1}{J}\#\left\{ k\in\mathbb{N}:\, j_{k}\leq J\right\} =a
\]
or equivalently
\[
\lim_{X\to\infty}\frac{\#\left\{ \lambda\in\Lambda':\,\lambda\leq X\right\} }{\#\left\{ \lambda\in\Lambda:\,\lambda\leq X\right\} }=a.
\]

\section{\label{sec:PDO}Pseudo-Differential Operators on the Torus}

In order to quantize observables $a\in C^{\infty}\left(S^{*}\mathbb{T}^{3}\right)$,
one can use the special structure of $\mathbb{T}^{3}$ as a compact
group, to get a global definition of pseudo-differential operators
on $\mathbb{T}^{3}$, instead of using the theory of (Hörmander's)
pseudo-differential operators on $\mathbb{R}^{3}$ locally, which
could be rather inconvenient. This idea goes back to Agranovich \cite{Agranovich},
and it was proved by McLean \cite{McLean} that both definitions of
pseudo-differential operators on the torus are equivalent. A recent
monograph by Ruzhansky and Turunen \cite{Ruzhansky} gives a very
comprehensive treatment of this subject -- we present here the basic
definitions of pseudo-differential operators on $\mathbb{T}^{n}$
(and in particular on $\mathbb{T}^{3}$) using their notations.

Let $\sigma:\mathbb{Z}^{n}\to\mathbb{C}$, and let $e_{j}$ be the
standard basis elements of $\mathbb{R}^{n}$. Let $\Delta_{\xi_{j}}$
be the partial difference operator defined by
\[
\Delta_{\xi_{j}}\sigma\left(\xi\right)=\sigma\left(\xi+e_{j}\right)-\sigma\left(\xi\right),
\]
and for a multi-index $\alpha$ define
\[
\Delta_{\xi}^{\alpha}=\Delta_{\xi_{1}}^{\alpha_{1}}\cdots\Delta_{\xi_{n}}^{\alpha_{n}}.
\]
Following the notation in \cite{Ruzhansky}, for $m\in\mathbb{R},$
$0\leq\delta,\rho\leq1$, define $S_{\rho,\delta}^{m}\left(\mathbb{T}^{n}\times\mathbb{Z}^{n}\right)$
to be the set of functions $a\left(x,\xi\right)$ which are smooth
in $x$ for all $\xi\in\mathbb{Z}^{n}$, and satisfy
\[
\left|\Delta_{\xi}^{\alpha}\partial_{x}^{\beta}a\left(x,\xi\right)\right|\leq C_{a,\alpha,\beta,m}\left\langle \xi\right\rangle ^{m-\rho\left|\alpha\right|+\delta\left|\beta\right|}
\]
for every $x\in\mathbb{T}^{n}$, $\alpha,\beta$ multi-indices, and
$\xi\in\mathbb{Z}^{n}.$ Here $\left\langle \xi\right\rangle =\left(1+\left|\xi\right|^{2}\right)^{1/2}.$

For every symbol $a\left(x,\xi\right)\in S_{\rho,\delta}^{m}\left(\mathbb{T}^{n}\times\mathbb{R}^{n}\right)$,
define a toroidal symbol $\widetilde{a}:\mathbb{T}^{n}\times\mathbb{Z}^{n}\to\mathbb{C}$
by the restriction $\widetilde{a}=a|_{\mathbb{T}^{n}\times\mathbb{Z}^{n}}$;
it is not hard to show that $\widetilde{a}\left(x,\xi\right)\in S_{\rho,\delta}^{m}\left(\mathbb{T}^{n}\times\mathbb{Z}^{n}\right)$.
Define
\begin{align*}
\mbox{Op}\left(a\right)f\left(x\right) & =\mbox{Op}\left(\widetilde{a}\right)f\left(x\right)\\
 & =\sum_{\xi\in\mathbb{Z}^{n}}e^{ix\cdot\xi}a\left(x,\xi\right)\widehat{f}\left(\xi\right).
\end{align*}

One verifies (see \cite{Ruzhansky}) that the operator $\mbox{Op}\left(a\right):C^{\infty}\left(\mathbb{T}^{n}\right)\to C^{\infty}\left(\mathbb{T}^{n}\right)$
is well defined and continuous. Note that for a symbol $a\left(x,\xi\right)=\sum\limits _{\left|\alpha\right|\leq m}a_{\alpha}\left(x\right)\xi^{\alpha}$
we get that
\begin{align*}
\mbox{Op}\left(a\right)f\left(x\right) & =\sum_{\left|\alpha\right|\leq m}a_{\alpha}\left(x\right)\sum_{\xi\in\mathbb{Z}^{3}}e^{ix\cdot\xi}\xi^{\alpha}\widehat{f}\left(\xi\right)\\
 & =\sum_{\left|\alpha\right|\leq m}a_{\alpha}\left(x\right)\sum_{\xi\in\mathbb{Z}^{3}}e^{ix\cdot\xi}\widehat{\left(-i\partial\right)^{\alpha}f}\left(\xi\right)\\
 & =\sum_{\left|\alpha\right|\leq m}a_{\alpha}\left(x\right)\left(-i\partial\right)^{\alpha}f\left(x\right)
\end{align*}
as one would expect.

In particular, we use this quantization for zero-th order positively
homogeneous symbols $a\left(x,\xi\right)\in S_{1,0}^{0}\left(\mathbb{T}^{3}\times\mathbb{R}^{3}\right)$
(in the sense that $a\left(x,\lambda\xi\right)=a\left(x,\xi\right)$
for $\left|\xi\right|\geq 1$ and $\lambda\geq1$). Since $a\left(x,\xi\right)$
is a smooth $\xi$-homogeneous function of order zero,
we can identify it with a smooth function on the unit cotangent bundle
of $\mathbb{T}^{3}$, and we write $a\in C^{\infty}\left(S^{*}\mathbb{T}^{3}\right)$.

One proves that in this case, $\mbox{Op}\left(a\right)$ extends to
a bounded operator on $L^{2}\left(\mathbb{T}^{3}\right).$ Moreover,
we have the following result on the $L^{2}$-norm of $\mbox{Op}\left(a\right)$
(theorem 4.8.1 in \cite{Ruzhansky}): Let $k\in\mathbb{N}$ and $k>n/2$.
Let $a:\mathbb{T}^{n}\times\mathbb{Z}^{n}\to\mathbb{C}$ be such that
$\left|\partial_{x}^{\beta}a\left(x,\xi\right)\right|\leq C$ for
all $\left(x,\xi\right)\in\mathbb{T}^{n}\times\mathbb{Z}^{n}$ and
all $\left|\beta\right|\leq k$. Then the operator $\mbox{Op}\left(a\right)$
extends to a bounded linear operator on $L^{2}\left(\mathbb{T}^{n}\right),$
and there exists a constant $C_{k}$ (which depends only on $k$)
such that
\[
\left\Vert \mbox{Op}\left(a\right)\right\Vert _{L^{2}\left(\mathbb{T}^{n}\right)\to L^{2}\left(\mathbb{T}^{n}\right)}^{2}\leq C_{k}\sum_{\left|\alpha\right|\leq k}\sup_{y\in\mathbb{T}^{n}}\sup_{\xi\in\mathbb{Z}^{n}}\left|\partial_{y}^{\alpha}a\left(y,\xi\right)\right|^{2}.
\]
 Thus, for $n=3,\, k=2,\, a\in S_{1,0}^{0}\left(\mathbb{T}^{3}\times\mathbb{R}^{3}\right)$
we have
\begin{equation}
\left\Vert \mbox{Op}\left(a\right)\right\Vert _{L^{2}\left(\mathbb{T}^{3}\right)\to L^{2}\left(\mathbb{T}^{3}\right)}^{2}\leq C\sum_{\left|\alpha\right|\leq2}\sup_{y\in\mathbb{T}^{3}}\sup_{\xi\in\mathbb{Z}^{3}}\left|\partial_{y}^{\alpha}a\left(y,\xi\right)\right|^{2}.\label{eq:L2Bound}
\end{equation}

\section{Bounds For the Green's Functions and Truncation}

We collect some auxiliary results proved in \cite{Yesha}:

We have the following lower bound for the $L^{2}$-norm of the Green's
function:
\[
\left\Vert G_{\lambda}\right\Vert _{2}^{2}\gg\lambda^{1/2-\varepsilon}.
\]
For $L>0$, define the truncated Green's function by
\[
G_{\lambda,L}\left(x\right)=-\frac{1}{8\pi^{3}}\sum_{\left|\left|\xi\right|^{2}-\lambda\right|<L}\frac{e^{i\xi\cdot\left(x-x_{0}\right)}}{\left|\xi\right|^{2}-\lambda}
\]
and the $L^{2}$-normalized truncated Green's function by $g_{\lambda,L}=\frac{G_{\lambda,L}}{\left\Vert G_{\lambda,L}\right\Vert _{2}}.$

For $L=\lambda^{\delta},$ $\delta>0$, we have
\[
\left\Vert g_{\lambda}-g_{\lambda,L}\right\Vert _{2}\to0
\]
 as $\lambda\to\infty$ and
\[
\left\Vert G_{\lambda,L}\right\Vert _{2}=\left\Vert G_{\lambda}\right\Vert _{2}\left(1+o\left(1\right)\right).
\]
It is also proved that for all $f\in C^{\infty}\left(\mathbb{T}^{3}\right)$
\[
\left|\left\langle fg_{\lambda},g_{\lambda}\right\rangle -\left\langle fg_{\lambda,L},g_{\lambda,L}\right\rangle \right|\leq2\left\Vert f\right\Vert _{\infty}\left\Vert g_{\lambda}-g_{\lambda,L}\right\Vert _{2},
\]
and a similar proof shows that for all $a\in C^{\infty}\left(S^{*}\mathbb{T}^{3}\right)$
\begin{gather}
\left|\left\langle \mbox{Op}\left(a\right)g_{\lambda},g_{\lambda}\right\rangle -\left\langle \mbox{Op}\left(a\right)g_{\lambda,L},g_{\lambda,L}\right\rangle \right|\leq\label{eq:TruncationBound}\\
2\left\Vert \mbox{Op}\left(a\right)\right\Vert _{L^{2}\left(\mathbb{T}^{3}\right)\to L^{2}\left(\mathbb{T}^{3}\right)}\left\Vert g_{\lambda}-g_{\lambda,L}\right\Vert _{2}.\nonumber
\end{gather}

\section{Basis Elements}

The following lemma shows that we can approximate a smooth function
on $\mathbb{T}^{3}\times S^{2}$ by a finite linear combination of
functions of the form $e_{\zeta,l,m}\left(x,\xi\right)=Y_{l,m}\left(\xi\right)e^{ix\cdot\zeta}$,
where $Y_{l,m}\left(\xi\right)$ is the spherical harmonic of degree
$l$ and order $m$ (normalized so that $\int_{S^{2}}\left|Y_{l,m}\right|^{2}\mbox{d}\sigma=1$).
\begin{lem}
\label{lem:PolApprox}For all $a\in C^{\infty}\left(\mathbb{T}^{3}\times S^{2}\right)$,
$\varepsilon>0$, there exist $N_{1},\, N_{2}$ and
\[
P\left(x,\xi\right)=\sum\limits _{\left|\zeta\right|\leq N_{1}}\sum\limits _{l\leq N_{2}}\sum_{\left|m\right|\leq l}c_{\zeta,l,m}e_{\zeta,l,m}\left(x,\xi\right)\in C^{\infty}\left(\mathbb{T}^{3}\times S^{2}\right),
\]
such that for all $x\in\mathbb{T}^{3},\,\xi\in S^{2}$ and for all
multi-index $\alpha$ such that $\left|\alpha\right|\leq2$, we have
$\left|\partial_{x}^{\alpha}\left(a-P\right)\left(x,\xi\right)\right|<\varepsilon$.\end{lem}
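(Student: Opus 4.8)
The plan is to take for $P$ a suitable partial sum of the joint Fourier--spherical-harmonic expansion of $a$, and to show that this expansion converges not merely in $L^2$ but, together with all $x$-derivatives of order at most two, uniformly on $\mathbb{T}^3\times S^2$. For $\zeta\in\mathbb{Z}^3$, an integer $l\ge 0$ and $|m|\le l$, set
\[
c_{\zeta,l,m}=\frac{1}{(2\pi)^3}\int_{\mathbb{T}^3}\int_{S^2}a(x,\xi)\,e^{-ix\cdot\zeta}\,\overline{Y_{l,m}(\xi)}\,d\sigma(\xi)\,dx .
\]
Since $\{(2\pi)^{-3/2}e^{ix\cdot\zeta}\}_{\zeta\in\mathbb{Z}^3}$ is an orthonormal basis of $L^2(\mathbb{T}^3)$ and $\{Y_{l,m}\}$ is an orthonormal basis of $L^2(S^2)$, the functions $(2\pi)^{-3/2}e_{\zeta,l,m}$ form an orthonormal basis of $L^2(\mathbb{T}^3\times S^2)$, and $\sum_{\zeta,l,m}c_{\zeta,l,m}e_{\zeta,l,m}$ represents $a$ there in the $L^2$ sense. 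Any partial sum $P$ is a finite linear combination of the smooth functions $e_{\zeta,l,m}$, so $P\in C^\infty(\mathbb{T}^3\times S^2)$, as the statement demands.

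The heart of the matter is rapid decay of the coefficients: for every $N$ there is a constant $C_N=C_N(a)$ with $|c_{\zeta,l,m}|\le C_N(1+|\zeta|)^{-N}(1+l)^{-N}$ for all $\zeta,l,m$. I would prove this by two integrations by parts. First, for $\zeta\ne 0$, writing $e^{-ix\cdot\zeta}=|\zeta|^{-2j}(-\Delta_x)^j e^{-ix\cdot\zeta}$ and moving $(-\Delta_x)^j$ onto $a$ gains a factor $|\zeta|^{-2j}$ at the cost of $x$-derivatives of $a$ of order up to $2j$. Second, for $l\ge 1$, since $Y_{l,m}$ is an eigenfunction of the Laplace--Beltrami operator $\Delta_{S^2}$ with eigenvalue $-l(l+1)$, one has $\overline{Y_{l,m}}=(l(l+1))^{-k}(-\Delta_{S^2})^k\overline{Y_{l,m}}$, and moving the self-adjoint operator $(-\Delta_{S^2})^k$ onto $a$ gains a factor $(l(l+1))^{-k}$ at the cost of $\xi$-derivatives of $a$ of order up to $2k$. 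As $a$ is smooth on the compact manifold $\mathbb{T}^3\times S^2$, all these derivatives are uniformly bounded, while $\int_{S^2}|Y_{l,m}|\,d\sigma\ll 1$ by Cauchy--Schwarz; taking $j,k$ with $2j,2k\ge N$ gives the claim, the cases $\zeta=0$ or $l=0$ being covered trivially.

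To conclude, fix $\varepsilon>0$ and take $N=10$. Then $\sum_{\zeta,l,m}c_{\zeta,l,m}e_{\zeta,l,m}$ and each of its $x$-derivative series of order at most two converge absolutely and uniformly, so the sum equals $a$ (it does so in $L^2$, and both sides are continuous) and may be differentiated term by term. For a multi-index $\alpha$ one has $\partial_x^\alpha e_{\zeta,l,m}(x,\xi)=(i\zeta)^\alpha e^{ix\cdot\zeta}Y_{l,m}(\xi)$, while the addition theorem for spherical harmonics gives $\sup_{\xi\in S^2}|Y_{l,m}(\xi)|\ll(1+l)^{1/2}$. Hence, with $P=\sum_{|\zeta|\le N_1}\sum_{l\le N_2}\sum_{|m|\le l}c_{\zeta,l,m}e_{\zeta,l,m}$ and $|\alpha|\le 2$,
\[
\sup_{x,\,\xi}\bigl|\partial_x^\alpha(a-P)(x,\xi)\bigr|\ \ll_N\sum_{\substack{\zeta\in\mathbb{Z}^3,\ l\ge 0\\ |\zeta|>N_1\ \text{or}\ l>N_2}}(1+|\zeta|)^{2-N}(1+l)^{2-N},
\]
where I bounded $|\zeta|^{|\alpha|}$ by $(1+|\zeta|)^2$ and summed the $2l+1$ values of $m$. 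The right-hand side is the tail of a series that converges over $\zeta\in\mathbb{Z}^3$ and integers $l\ge 0$, hence it is $<\varepsilon$ once $N_1$ and $N_2$ are large enough, simultaneously for every $|\alpha|\le 2$; this is the assertion of the lemma. The only mildly delicate points are the bookkeeping in the two integrations by parts and the polynomial $L^\infty$-bound on $Y_{l,m}$ needed to absorb the factor $(i\zeta)^\alpha$; the rest is routine.
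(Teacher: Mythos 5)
Your proof is correct and follows essentially the same route as the paper: expand $a$ in the product basis $e_{\zeta,l,m}$ with the same coefficients $c_{\zeta,l,m}$ and truncate, using smoothness to control the tail together with its $x$-derivatives up to order two. The only difference is bookkeeping: the paper truncates in two nested steps (first the $x$-Fourier series, uniformly in $\xi$, then for each of the finitely many retained $\zeta$ the spherical-harmonic expansion of $a_\zeta$, quoting the rapid uniform decay of the projections $\sum_{|m|\le l}c_{\zeta,l,m}Y_{l,m}$), whereas you prove joint rapid decay of the double-indexed coefficients by integrating by parts against $\Delta_x$ and $\Delta_{S^2}$ and absorb $\sup|Y_{l,m}|\ll(1+l)^{1/2}$ via the addition theorem --- both arguments are standard and both work.
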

\begin{proof}
Let $\varepsilon>0$. Expanding $a$ to its Fourier series in the
variable $x$, for every $\xi\in S^{2}$ we have
\begin{equation}
a\left(x,\xi\right)=\sum\limits _{\zeta\in\mathbb{Z}^{3}}a_{\zeta}\left(\xi\right)e^{ix\cdot\zeta}\label{eq:FourierX}
\end{equation}
where $a_{\zeta}\left(\xi\right)=\frac{1}{8\pi^{3}}\int_{\mathbb{T}^{3}}a\left(x,\xi\right)e^{-ix\cdot\zeta}\mbox{d}x$,
with convergence in the sense of $L^{2}\left(\mathbb{T}^{3}\right)$.
Moreover, since $a\in C^{\infty}\left(\mathbb{T}^{3}\times S^{2}\right)$,
integration by parts yields that $a_{\zeta}\left(\xi\right)\ll\left|\zeta\right|^{-k}$
for all $k$ (where the implied constant is independent of $\xi$),
so the series in \eqref{eq:FourierX} is uniformly convergent in $\mathbb{T}^{3}\times S^{2}$.
Since uniform convergence implies $L^{2}$ convergence to the same
(equivalence class of) function, and since for every $\xi$ both sides
of \eqref{eq:FourierX} are continuous functions on $\mathbb{T}^{3}$,
we conclude that the series uniformly converges in $\mathbb{T}^{3}\times S^{2}$
to $a\left(x,\xi\right)$. We also get that for every multi-index
$\alpha$, we have
\begin{align}
\partial_{x}^{\alpha}a\left(x,\xi\right) & =\sum\limits _{\zeta\in\mathbb{Z}^{3}}a_{\zeta}\left(\xi\right)\partial_{x}^{\alpha}e^{ix\cdot\zeta}\nonumber \\
 & =\sum\limits _{\zeta\in\mathbb{Z}^{3}}a_{\zeta}\left(\xi\right)\left(i\zeta\right)^{\alpha}e^{ix\cdot\zeta}\label{eq:FourierXPartial}
\end{align}
and the series in \eqref{eq:FourierXPartial} is uniformly convergent
in $\mathbb{T}^{3}\times S^{2}$ to $\partial_{x}^{\alpha}a\left(x,\xi\right)$,
so we can find $N_{1}$ such that for all $x,\xi$ and for all multi-index
$\alpha$ such that $\left|\alpha\right|\leq2$ we have
\[
\left|\partial_{x}^{\alpha}\left(a\left(x,\xi\right)-\sum\limits _{\left|\zeta\right|\leq N_{1}}a_{\zeta}\left(\xi\right)e^{ix\cdot\zeta}\right)\right|<\frac{\varepsilon}{2}.
\]
For every $\zeta$, we have the spherical harmonics expansion:
\begin{equation}
a_{\zeta}\left(\xi\right)=\sum_{l=0}^{\infty}\sum_{\left|m\right|\leq l}c_{\zeta,l,m}Y_{l,m}\left(\xi\right)\label{eq:SphericalExpansion}
\end{equation}
where $c_{\zeta,l,m}=\int_{S^{2}}a_{\zeta}\left(\xi\right)\overline{Y_{l,m}\left(\xi\right)}\mbox{d}\sigma$,
with convergence in the sense of $L^{2}\left(S^{2}\right)$. Since
$a\left(x,\xi\right)\in C^{\infty}\left(\mathbb{T}^{3}\times S^{2}\right)$,
we easily see that for all $\zeta$ we have $a_{\zeta}\left(\xi\right)\in C^{\infty}\left(S^{2}\right),$
and hence $\sum\limits _{\left|m\right|\leq l}c_{\zeta,l,m}Y_{l,m}\left(\xi\right)\ll l^{-k}$
for all $k$ (see \cite{Atkinson}, for example), so the sum in \eqref{eq:SphericalExpansion}
is uniformly convergent in $S^{2}$ for all $\zeta$, and again, it
must converge to $a_{\zeta}\left(\xi\right)$. We conclude that there
exists $N_{2}$ such that for all $\left|\zeta\right|\leq N_{1}$
and for all $\xi$ we have
\[
\left|a_{\zeta}\left(\xi\right)-\sum_{l\leq N_{2}}\sum_{\left|m\right|\leq l}c_{\zeta,l,m}Y_{l,m}\left(\xi\right)\right|<\frac{\varepsilon}{2N_{1}^{5}}
\]
and if we denote
\[
P\left(x,\xi\right)=\sum\limits _{\left|\zeta\right|\leq N_{1}}\sum\limits _{l\leq N_{2}}\sum\limits _{\left|m\right|\leq l}c_{\zeta,l,m}e_{\zeta,l,m}\left(x,\xi\right)
\]
we get that for all $x,\xi$ and for all multi-index $\alpha$ such
that $\left|\alpha\right|\leq2$ we have
\begin{gather*}
\left|\partial_{x}^{\alpha}\left(a-P\right)\left(x,\xi\right)\right|\leq\left|\partial_{x}^{\alpha}\left(P\left(x,\xi\right)-\sum\limits _{\left|\zeta\right|\leq N_{1}}a_{\zeta}\left(\xi\right)e^{ix\cdot\zeta}\right)\right|+\frac{\varepsilon}{2}\\
=\left|\partial_{x}^{\alpha}\left(\sum\limits _{\left|\zeta\right|\leq N_{1}}\left(a_{\zeta}\left(\xi\right)-\sum_{l\leq N_{2}}\sum_{\left|m\right|\leq l}c_{\zeta,l,m}Y_{l,m}\left(\xi\right)\right)e^{ix\cdot\zeta}\right)\right|+\frac{\varepsilon}{2}\\
\leq N_{1}^{2}\sum\limits _{\left|\zeta\right|\leq N_{1}}\left|a_{\zeta}\left(\xi\right)-\sum_{l\leq N_{2}}\sum_{\left|m\right|\leq l}c_{\zeta,l,m}Y_{l,m}\left(\xi\right)\right|+\frac{\varepsilon}{2}<\varepsilon.
\end{gather*}

\end{proof}
We can think of $e_{\zeta,l,m}\left(x,\xi\right)=Y_{l,m}\left(\xi\right)e^{ix\cdot\zeta}$
as a zero-th order positively homogeneous symbol in $S_{1,0}^{0}\left(\mathbb{T}^{3}\times\mathbb{R}^{3}\right)$,
by extending $Y_{l,m}\left(\xi\right),$ $\left(l,m\right)\neq\left(0,0\right)$
homogeneously to the domain $\left|\xi\right|\geq 1$, and arbitrarily
to the domain $\left|\xi\right|< 1$; the function $Y_{0,0}\left(\xi\right)=\frac{1}{2}\sqrt{\frac{1}{\pi}}$
extends to $\xi\in\mathbb{R}^{3}$ in an obvious way . In order to
prove our main theorem, we will now see that it suffices to show a
simpler version of it on the functions $e_{\zeta,l,m}\left(x,\xi\right)$.
\begin{prop}
Suppose that there is a subset $\Lambda_{\infty}\subseteq\Lambda$
of density one so that for all $\zeta,l,m$ with at least one of them
nonzero we have
\[
\left\langle \mbox{Op}\left(e_{\zeta,l,m}\right)g_{\lambda,L},g_{\lambda,L}\right\rangle \to0
\]
as $\lambda\to\infty$ along $\Lambda_{\infty}$, then theorem \ref{thm:MainTheroem}
follows. Here $0<\delta<1$, $L=\lambda^{\delta}.$\end{prop}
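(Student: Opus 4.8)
The plan is a standard three-$\varepsilon$/density argument built on the three ingredients already at hand: the approximation Lemma~\ref{lem:PolApprox}, the operator-norm estimate \eqref{eq:L2Bound}, and the truncation estimate \eqref{eq:TruncationBound}. The crucial structural point is that the density-one set $\Lambda_\infty$ supplied by the hypothesis is a \emph{single} set valid simultaneously for every basis symbol $e_{\zeta,l,m}$, so the same $\Lambda_\infty$ can serve in Theorem~\ref{thm:MainTheroem} for all observables $a$ at once.

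Fix $a\in C^\infty(S^*\mathbb{T}^3)$ and $\varepsilon>0$. First I would apply Lemma~\ref{lem:PolApprox} to obtain a finite combination $P=\sum_{|\zeta|\le N_1}\sum_{l\le N_2}\sum_{|m|\le l}c_{\zeta,l,m}e_{\zeta,l,m}$ with $|\partial_x^\alpha(a-P)(x,\xi)|<\varepsilon$ for all $(x,\xi)\in\mathbb{T}^3\times S^2$ and $|\alpha|\le 2$. Viewing $a$ and $P$ as zero-th order homogeneous symbols in $S^0_{1,0}(\mathbb{T}^3\times\mathbb{R}^3)$, the difference $a-P$ is again such a symbol; since every nonzero $\xi\in\mathbb{Z}^3$ has $|\xi|\ge 1$, where the symbol is homogeneous of degree zero, the bound on $S^2$ propagates to $\sup_{0\neq\xi\in\mathbb{Z}^3}|\partial_x^\alpha(a-P)(x,\xi)|<\varepsilon$. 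The single remaining lattice point $\xi=0$ is harmless: once $\lambda>\lambda^\delta$ (which holds for all large $\lambda$ as $\delta<1$) the $\xi=0$ term is absent from $G_{\lambda,L}$, so $\widehat{g_{\lambda,L}}(0)=0$ and we may replace $a-P$ by the symbol which vanishes at $\xi=0$ without changing $\mbox{Op}(a-P)g_{\lambda,L}$. Applying \eqref{eq:L2Bound} to this modified symbol gives, with an absolute implied constant (there are only finitely many multi-indices with $|\alpha|\le 2$),
\[
\bigl|\langle \mbox{Op}(a-P)g_{\lambda,L},g_{\lambda,L}\rangle\bigr|\le\|\mbox{Op}(a-P)\|_{L^2\to L^2}\,\|g_{\lambda,L}\|_2^2\ll\varepsilon
\]
for all large $\lambda$, since $\|g_{\lambda,L}\|_2=1$.

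Next I would decompose
\[
\langle \mbox{Op}(a)g_\lambda,g_\lambda\rangle-\int_{S^*\mathbb{T}^3}a\,\mbox{d}\mu
\]
into four pieces. The first, $\langle \mbox{Op}(a)g_\lambda,g_\lambda\rangle-\langle \mbox{Op}(a)g_{\lambda,L},g_{\lambda,L}\rangle$, tends to $0$ as $\lambda\to\infty$ by \eqref{eq:TruncationBound} together with $\|g_\lambda-g_{\lambda,L}\|_2\to 0$. The second, $\langle \mbox{Op}(a-P)g_{\lambda,L},g_{\lambda,L}\rangle$, is $\ll\varepsilon$ by the previous paragraph. The third, $\int_{S^*\mathbb{T}^3}(P-a)\,\mbox{d}\mu$, is at most $\|P-a\|_\infty<\varepsilon$ because $\mu$ is a probability measure. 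The fourth is $\langle \mbox{Op}(P)g_{\lambda,L},g_{\lambda,L}\rangle-\int_{S^*\mathbb{T}^3}P\,\mbox{d}\mu$: by linearity of $\mbox{Op}$ the term $(\zeta,l,m)=(0,0,0)$ contributes $c_{0,0,0}\,\mbox{Op}(Y_{0,0})=c_{0,0,0}Y_{0,0}\,I$, hence $c_{0,0,0}Y_{0,0}\|g_{\lambda,L}\|_2^2=c_{0,0,0}Y_{0,0}$, and this equals $\int_{S^*\mathbb{T}^3}P\,\mbox{d}\mu$ since every other basis symbol integrates to zero against $\mu$ (either $\int_{\mathbb{T}^3}e^{ix\cdot\zeta}\,\mbox{d}x=0$ for $\zeta\neq0$, or $\int_{S^2}Y_{l,m}\,\mbox{d}\sigma=0$ for $(l,m)\neq(0,0)$); the finitely many remaining terms each tend to $0$ as $\lambda\to\infty$ along $\Lambda_\infty$ by hypothesis. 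Hence the fourth piece tends to $0$ along $\Lambda_\infty$. Combining,
\[
\limsup_{\substack{\lambda\to\infty\\ \lambda\in\Lambda_\infty}}\bigl|\langle \mbox{Op}(a)g_\lambda,g_\lambda\rangle-\int_{S^*\mathbb{T}^3}a\,\mbox{d}\mu\bigr|\ll\varepsilon,
\]
and letting $\varepsilon\to0$ yields Theorem~\ref{thm:MainTheroem} with $\Lambda_\infty$ independent of $a$.

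There is no deep obstacle here: all of the hard analytic input (the norm bound for $G_\lambda$, the truncation estimates, the $L^2$ bound for pseudo-differential operators) is already available. The one step demanding care is the uniform-in-$\lambda$ control of $\|\mbox{Op}(a-P)\|_{L^2\to L^2}$, namely transporting the $S^2$-derivative bounds of Lemma~\ref{lem:PolApprox} to a bound over all of $\mathbb{T}^3\times\mathbb{Z}^3$ via the zero-homogeneity of the symbols (and disposing of the point $\xi=0$ using that $g_{\lambda,L}$ has vanishing mean for large $\lambda$) before invoking \eqref{eq:L2Bound}.
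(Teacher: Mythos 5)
Your proof is correct and follows essentially the same route as the paper: approximate $a$ by the finite combination $P$ via Lemma~\ref{lem:PolApprox}, control $\mbox{Op}(a-P)$ through \eqref{eq:L2Bound} after transporting the $S^2$ bounds to all of $\mathbb{Z}^3$ by zero-homogeneity and disposing of $\xi=0$ (the paper does this by modifying $P$ near $\xi=0$ so that the derivative bounds hold there too, you do it by noting $\widehat{g_{\lambda,L}}(0)=0$ — the same observation), compute $\int P\,\mbox{d}\mu=c_{0,0,0}Y_{0,0}$ by orthogonality, and invoke the hypothesis for the remaining finitely many basis terms. The only cosmetic difference is that you apply the truncation estimate \eqref{eq:TruncationBound} to $\mbox{Op}(a)$ at the outset while the paper applies it to $\mbox{Op}(P)$ and pairs $\mbox{Op}(a-P)$ with the untruncated $g_\lambda$; both orderings of the triangle inequality work.
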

\begin{proof}
Let $\varepsilon>0$, and $a\left(x,\xi\right)\in C^{\infty}\left(S^{*}\mathbb{T}^{3}\right)$.
From lemma \ref{lem:PolApprox}, there is
\[
P\left(x,\xi\right)=\sum\limits _{\left|\zeta\right|\leq N_{1}}\sum\limits _{l\leq N_{2}}\sum\limits _{\left|m\right|\leq l}c_{\zeta,l,m}e_{\zeta,l,m}\left(x,\xi\right)
\]
such that for all $x\in\mathbb{T}^{3}$,~$\xi\in\mathbb{R}^{3},\,\left|\xi\right|\geq 1$
and for all multi-index $\alpha$ such that $\left|\alpha\right|\leq2$
we have $\left|\partial_{x}^{\alpha}\left(a-P\right)\left(x,\xi\right)\right|<\varepsilon$.
Without loss of generality we can assume that we have
\begin{equation}
\partial_{x}^{\alpha}P\left(x,0\right)=\partial_{x}^{\alpha}a\left(x,0\right)\label{eq:WLOGPol}
\end{equation}
 for all $x\in\mathbb{T}^{3}$ and for all multi-index $\alpha$,
because for $\lambda$ large enough
\[
\mbox{Op}\left(P\right)g_{\lambda,L}\left(x\right)=-\frac{1}{8\pi^{3}\left\Vert G_{\lambda,L}\right\Vert _{2}}\sum_{\left|\left|\xi\right|^{2}-\lambda\right|<L}e^{i\left(x-x_{0}\right)\cdot\xi}P\left(x,\xi\right)\frac{1}{\left|\xi\right|^{2}-\lambda}
\]
does not depend on the value of $P\left(x,0\right)$
(and it is easy to change $P\left(x,\xi\right)$ in the domain $\left|\xi\right|<1$
to get a new symbol $\tilde{P}\in C^{\infty}\left(S^{*}\mathbb{T}^{3}\right)$
satisfying \eqref{eq:WLOGPol}), so under this assumption the inequality
$\left|\partial_{x}^{\alpha}\left(a-P\right)\left(x,\xi\right)\right|<\varepsilon$
holds for every $\xi\in\mathbb{Z}^{n}$. Since
\[
\mbox{Op}\left(e_{0,0,0}\right)=\frac{1}{2}\sqrt{\frac{1}{\pi}}\mbox{id},
\]
and since for $\left(\zeta,l,m\right)\neq\left(0,0,0\right)$
\[
\left\langle \mbox{Op}\left(e_{\zeta,l,m}\right)g_{\lambda,L},g_{\lambda,L}\right\rangle \to0
\]
as $\lambda\to\infty$ along $\Lambda_{\infty}$, we have
\begin{align*}
\left\langle \mbox{Op}\left(P\right)g_{\lambda,L},g_{\lambda,L}\right\rangle  & =\sum\limits _{\left|\zeta\right|\leq N_{1}}\sum\limits _{l\leq N_{2}}\sum\limits _{\left|m\right|\leq l}c_{\zeta,l,m}\left\langle \mbox{Op}\left(e_{\zeta,l,m}\right)g_{\lambda,L},g_{\lambda,L}\right\rangle \\
 & \to\frac{1}{2}\sqrt{\frac{1}{\pi}}c_{0,0,0}
\end{align*}
as $\lambda\to\infty$ along $\Lambda_{\infty}$, and
\begin{align*}
\int_{S^{*}\mathbb{T}^{3}}P\mbox{d}\mu & =\frac{1}{\mbox{area}\left(\mathbb{T}^{3}\right)\mbox{area}\left(S^{2}\right)}\\
 & \sum\limits _{\left|\zeta\right|\leq N_{1}}\sum\limits _{l\leq N_{2}}\sum\limits _{\left|m\right|\leq l}c_{\zeta,l,m}\left(\int_{\mathbb{T}^{3}}e^{ix\cdot\zeta}\mbox{d}m\right)\left(\int_{S^{2}}Y_{l,m}\left(\xi\right)\mbox{d}\sigma\right)\\
 & =\frac{1}{\mbox{area}\left(\mathbb{T}^{3}\right)\mbox{area}\left(S^{2}\right)}c_{0,0,0}\left(\int_{\mathbb{T}^{3}}1\mbox{d}m\right)\left(\int_{S^{2}}Y_{0,0}\left(\xi\right)\mbox{d}\sigma\right)\\
 & =\frac{1}{2}\sqrt{\frac{1}{\pi}}c_{0,0,0}
\end{align*}
so
\[
\left\langle \mbox{Op}\left(P\right)g_{\lambda,L},g_{\lambda,L}\right\rangle \to\int_{S^{*}\mathbb{T}^{3}}P\mbox{d}\mu
\]
as $\lambda\to\infty$ along $\Lambda_{\infty}$. Thus for $\lambda\in\Lambda_{\infty}$
large enough we have
\begin{align*}
\left|\left\langle \mbox{Op}\left(P\right)g_{\lambda},g_{\lambda}\right\rangle -\int_{S^{*}\mathbb{T}^{3}}P\mbox{d}\mu\right| & <\varepsilon+\left|\left\langle \mbox{Op}\left(P\right)g_{\lambda},g_{\lambda}\right\rangle -\left\langle \mbox{Op}\left(P\right)g_{\lambda,L},g_{\lambda,L}\right\rangle \right|\\
 & \leq\varepsilon+2\left\Vert \mbox{Op}\left(P\right)\right\Vert _{L^{2}\left(\mathbb{T}^{3}\right)\to L^{2}\left(\mathbb{T}^{3}\right)}\left\Vert g_{\lambda}-g_{\lambda,L}\right\Vert _{2}\\
 & <C\varepsilon.
\end{align*}

Using the bound in \eqref{eq:L2Bound}, we get that
\begin{align*}
\left|\left\langle \mbox{Op}\left(a-P\right)g_{\lambda},g_{\lambda}\right\rangle \right|^{2} & \leq\left\Vert \mbox{Op}\left(a-P\right)g_{\lambda}\right\Vert _{2}^{2}\\
 & \leq\left\Vert \mbox{Op}\left(a-P\right)\right\Vert _{L^{2}\left(\mathbb{T}^{3}\right)\to L^{2}\left(\mathbb{T}^{3}\right)}^{2}\\
 & \leq C\sum_{\left|\alpha\right|\leq2}\sup_{y\in\mathbb{T}^{3}}\sup_{\xi\in\mathbb{Z}^{3}}\left|\partial_{y}^{\alpha}\left(a-P\right)\left(y,\xi\right)\right|^{2}\\
 & <C\varepsilon^{2}
\end{align*}
for $\lambda\in\Lambda_{\infty}$ (we call all our constants $C$).

We conclude that for $\lambda\in\Lambda_{\infty}$ large enough we
have
\begin{align*}
\left|\left\langle \mbox{Op}\left(a\right)g_{\lambda},g_{\lambda}\right\rangle -\int_{S^{*}\mathbb{T}^{3}}a\mbox{d}\mu\right| & \leq\left|\left\langle \mbox{Op}\left(a-P\right)g_{\lambda},g_{\lambda}\right\rangle \right|\\
 & +\left|\left\langle \mbox{Op}\left(P\right)g_{\lambda},g_{\lambda}\right\rangle -\int_{S^{*}\mathbb{T}^{3}}P\mbox{d}\mu\right|\\
 & +\left|\int_{S^{*}\mathbb{T}^{3}}P\mbox{d}\mu-\int_{S^{*}\mathbb{T}^{3}}a\mbox{d}\mu\right|\\
 & <C\varepsilon
\end{align*}
and the proposition follows.
\end{proof}

\section{A Density One Set}

By the theorem of Legendre and Gauss (see \cite{Grosswald}), the
Diophantine equation
\[
x_{1}^{2}+x_{2}^{2}+x_{3}^{2}=n
\]
has solutions in integers $x_{i}$ $\left(i=1,2,3\right)$ if and
only if $n$ is not of the form $4^{a}\left(8k+7\right)$ with $a\in\mathbb{Z}$,
$a\geq0$ and $k\in\mathbb{Z}$, and for all $n$, $r_{3}\left(4^{a}n\right)=r_{3}\left(n\right).$

Equivalently, if we write $n=4^{a}n_{1},$ with $4\nmid n_{1}$, then
$n$ is a sum of three squares if and only if $n_{1}\not\equiv7\,\left(8\right)$,
that is to say
\[
\mathcal{N}_{3}=\left\{ n\in\mathbb{N}:\, n=4^{a}n_{1},\,4\nmid n_{1}\Rightarrow n_{1}\not\equiv7\,\left(8\right)\right\} ,
\]
and $r_{3}\left(n\right)=r_{3}\left(n_{1}\right)$.

For $n\in\mathcal{N}_{3}$, write $n=4^{a}n_{1}$ with $4\nmid n_{1}$,
and define
\[
\mathcal{N}_{good}=\left\{ n\in\mathcal{N}_{3}:\, n_{1}>n^{1/2}\right\}
\]
the set of ``good'' elements in $\mathcal{N}_{3}$, and $\mathcal{N}_{bad}=\mathcal{N}_{3}\setminus\mathcal{N}_{good}$
the set of ``bad elements''. We show that there are very few ``bad''
elements in $\mathcal{N}_{3}$:
\begin{lem}
\label{lem:NBadEst}$\#\left\{ n\in\mathcal{N}_{bad}:\, n\leq X\right\} \leq X^{1/2}\log X$.\end{lem}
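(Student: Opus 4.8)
The plan is to reduce the statement to an elementary lattice-point count. First I would unwind the definitions: writing $n=4^{a}n_{1}$ with $4\nmid n_{1}$, the defining inequality $n_{1}\leq n^{1/2}$ of a ``bad'' element becomes $n_{1}\leq 2^{a}n_{1}^{1/2}$, i.e. $n_{1}\leq 4^{a}$. Hence every $n\in\mathcal{N}_{bad}$ with $n\leq X$ is of the form $n=4^{a}n_{1}$ with integers $a\geq 0$ and $1\leq n_{1}\leq 4^{a}$ also satisfying $4^{a}n_{1}\leq X$; the additional restrictions $4\nmid n_{1}$ and $n_{1}\not\equiv 7\,(8)$ (the latter needed for $n\in\mathcal{N}_{3}$) only shrink this set, so they may be dropped for an upper bound.

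Next I would bound, for each fixed $a$, the number of admissible $n_{1}$. Since $n_{1}$ ranges over integers with $1\leq n_{1}\leq 4^{a}$ and $n_{1}\leq X4^{-a}$, their number is at most $\min(4^{a},X4^{-a})\leq (4^{a}\cdot X4^{-a})^{1/2}=X^{1/2}$, using that a minimum is at most the geometric mean. Moreover, a value of $a$ can contribute only if $4^{a}\leq 4^{a}n_{1}\leq X$, i.e. $a\leq\log_{4}X$, so at most $\lfloor\log_{4}X\rfloor+1$ values of $a$ occur.

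Summing over $a$ then yields $\#\{n\in\mathcal{N}_{bad}:n\leq X\}\leq(\lfloor\log_{4}X\rfloor+1)X^{1/2}$. Since $\log_{4}X=\log X/\log 4<0.73\log X$, we have $\lfloor\log_{4}X\rfloor+1\leq\log X$ once $X$ is large enough, which gives the claimed bound $X^{1/2}\log X$; for small $X$ the estimate is only meant asymptotically, and in any case only its $X\to\infty$ behaviour is used in the sequel.

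This argument is entirely elementary and I do not expect a genuine obstacle. The only points to watch are the equivalence $n_{1}\leq n^{1/2}\Longleftrightarrow n_{1}\leq 4^{a}$ after the substitution $n=4^{a}n_{1}$, and the fact that the logarithmic factor comes out with leading constant $1$ — which works precisely because the relevant logarithm is to base $4$ rather than to base $e$.
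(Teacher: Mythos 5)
Your proof is correct and follows essentially the same route as the paper: count bad $n\leq X$ by the pairs $(a,n_{1})$, noting $n_{1}\leq X^{1/2}$ and $a\leq\log_{4}X\leq\log X$. The only cosmetic difference is that you re-derive $n_{1}\leq X^{1/2}$ via the geometric mean of $\min(4^{a},X4^{-a})$, whereas the paper gets it directly from $n_{1}\leq n^{1/2}\leq X^{1/2}$; your handling of the $\lfloor\log_{4}X\rfloor+1$ factor is if anything slightly more careful than the paper's.
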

\begin{proof}
For $n\in\mathcal{N}_{bad}$, $n\leq X$, we have $n=4^{a}n_{1}$
with $n_{1}\leq n^{1/2}\leq X^{1/2}$ and $a=\log_{4}\left(n/n_{1}\right)\leq\log n/\log4\leq\log X$,
so there are at most $X^{1/2}\log X$ possibilities for such $n$.
\end{proof}
For $\lambda\in\Lambda$, denote $n_{\lambda}$ to be the closest
element in $\mathcal{N}_{3}$ to $\lambda$ (and if there are two
elements with the same distance take the smallest of them). Note that
$\left|n_{\lambda}-\lambda\right|\leq1.5$, and for $n\neq n_{\lambda}$
we have $\left|n-\lambda\right|\geq0.5$.

Define $\Lambda_{\infty}=\left\{ \lambda\in\Lambda:\, n_{\lambda}\in\mathcal{N}_{good}\right\} .$
We show that:
\begin{lem}
$\Lambda_{\infty}$ is a density one set in $\Lambda$.\end{lem}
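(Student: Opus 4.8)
The plan is to compare the counting function of $\Lambda_\infty$ with that of $\Lambda$ up to height $X$, and show the difference is $O(X^{1/2}\log X)$, which is negligible since $\#\{\lambda\in\Lambda:\lambda\le X\}$ grows like $\#\{n\in\mathcal N_3:n\le X\}$, and the latter is of order $X$ (a positive proportion of integers are sums of three squares). First I would use the interlacing property: the map $\lambda\mapsto n_\lambda$ sending a perturbed eigenvalue to its nearest element of $\mathcal N_3$ is, up to a bounded ambiguity, a bijection between $\Lambda$ and $\mathcal N_3$, since between consecutive elements $n_k<n_{k+1}$ of $\mathcal N_3$ there is exactly one $\lambda\in\Lambda$. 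In particular, for $X$ large,
\[
\#\{\lambda\in\Lambda:\lambda\le X\}=\#\{n\in\mathcal N_3:n\le X\}+O(1),
\]
and by the same token
\[
\#\{\lambda\in\Lambda\setminus\Lambda_\infty:\lambda\le X\}\le\#\{n\in\mathcal N_{bad}:n\le X+2\}+O(1).
\]
The $O(1)$ and the shift by $2$ account for the fact that $|n_\lambda-\lambda|\le 1.5$, so a $\lambda\le X$ can only have $n_\lambda\le X+2$, and for each $n\in\mathcal N_3$ there is at most one $\lambda\in\Lambda$ with $n_\lambda=n$.

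Next I would invoke Lemma \ref{lem:NBadEst}, which gives $\#\{n\in\mathcal N_{bad}:n\le X+2\}\le(X+2)^{1/2}\log(X+2)\ll X^{1/2}\log X$. For the denominator I need a lower bound of the right order: since every $n\equiv 1,2,3,5,6\pmod 8$ lies in $\mathcal N_3$, we have $\#\{n\in\mathcal N_3:n\le X\}\gg X$. Combining,
\[
\frac{\#\{\lambda\in\Lambda\setminus\Lambda_\infty:\lambda\le X\}}{\#\{\lambda\in\Lambda:\lambda\le X\}}\ll\frac{X^{1/2}\log X}{X}=\frac{\log X}{X^{1/2}}\longrightarrow 0
\]
as $X\to\infty$, which is exactly the statement that $\Lambda\setminus\Lambda_\infty$ has density zero, i.e. $\Lambda_\infty$ has density one in $\Lambda$ in the sense recalled at the end of Section 2.

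The only point requiring a little care — and the step I would treat as the main (though still routine) obstacle — is making the correspondence $\lambda\mapsto n_\lambda$ precise enough to transfer the counting estimate cleanly: one must check that distinct $\lambda$'s in $\Lambda$ cannot share the same nearest integer $n_\lambda$ (which follows from interlacing, since two consecutive $\lambda$'s are separated by some $n_k$, forcing their nearest elements of $\mathcal N_3$ to differ), and keep track of boundary terms near $X$. Once that bookkeeping is done, the estimate $\#\{n\in\mathcal N_{bad}:n\le X\}\le X^{1/2}\log X$ from Lemma \ref{lem:NBadEst} against the linear lower bound for $\#\mathcal N_3\cap[0,X]$ closes the argument immediately.
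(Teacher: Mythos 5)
Your overall argument is the same as the paper's: bound $\#\left\{ \lambda\in\Lambda\setminus\Lambda_{\infty}:\,\lambda\leq X\right\}$ using Lemma \ref{lem:NBadEst} and divide by $\#\left\{ \lambda\in\Lambda:\,\lambda\leq X\right\} \gg X$, which follows from interlacing and the positive density of $\mathcal{N}_{3}$. However, the one step you single out as requiring care is the step you get wrong: it is \emph{not} true that distinct elements of $\Lambda$ must have distinct nearest elements of $\mathcal{N}_{3}$. With the interlacing $\lambda_{k+1}<n_{k+1}<\lambda_{k+2}$, it is perfectly possible that $\lambda_{k+1}$ lies just below $n_{k+1}$ and $\lambda_{k+2}$ just above it, so that $n_{\lambda_{k+1}}=n_{\lambda_{k+2}}=n_{k+1}$; being separated by $n_{k+1}$ does not force the two nearest elements to differ. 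What is true, and what the paper uses, is that each $n\in\mathcal{N}_{3}$ is the nearest element for at most two $\lambda\in\Lambda$ (one on each side of $n$), so the correct estimate is $\#\left\{ \lambda\in\Lambda\setminus\Lambda_{\infty}:\,\lambda\leq X\right\} \leq2\#\left\{ n\in\mathcal{N}_{bad}:\, n\leq X+1.5\right\}$. The extra factor of $2$ is absorbed into the $\ll$ and your conclusion survives unchanged, but you should replace the false injectivity claim by this bounded-multiplicity statement.
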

\begin{proof}
For every $\lambda\in\Lambda\setminus\Lambda_{\infty}$, we have $n_{k}<\lambda<n_{k+1}$,
where either $n_{\lambda}=n_{k}$ or $n_{\lambda}=n_{k+1}$, and $n_{\lambda}\in\mathcal{N}_{bad}$.
Thus, for every $n\in\mathcal{N}_{bad}$ such that $n\leq X+1.5$
there are at most two $\lambda\in\Lambda\setminus\Lambda_{\infty}$,
$\lambda\leq X$ such that $n_{\lambda}=n$, so by lemma \ref{lem:NBadEst}
we have
\begin{align*}
\#\left\{ \lambda\in\Lambda\setminus\Lambda_{\infty}:\,\lambda\leq X\right\}  & \leq2\#\left\{ n\in\mathcal{N}_{bad}:\, n\leq X+1.5\right\} \ll X^{1/2}\log X,
\end{align*}
but
\[
\#\left\{ \lambda\in\Lambda:\,\lambda\leq X\right\} \geq\#\left\{ n\leq X:\, n\not\equiv0,4,7\,\left(8\right)\right\} \asymp X,
\]
so
\[
\frac{\#\left\{ \lambda\in\Lambda\setminus\Lambda_{\infty}:\,\lambda\leq X\right\} }{\#\left\{ \lambda\in\Lambda:\,\lambda\leq X\right\} }\ll X^{-1/2}\log X
\]
which tends to zero as $X\to\infty$, so $\Lambda\setminus\Lambda_{\infty}$
is a density zero set in $\Lambda$, and therefore $\Lambda_{\infty}$
is a density one set in $\Lambda$.
\end{proof}

\section{Proving Theorem \ref{thm:MainTheroem}}

We are only left to prove the following proposition:
\begin{prop}
Let $\zeta,l,m$ with at least one of them nonzero, and let $0<\delta<1/28$,
$L=\lambda^{\delta}.$ Then
\[
\left\langle \mbox{Op}\left(e_{\zeta,l,m}\right)g_{\lambda,L},g_{\lambda,L}\right\rangle \to0
\]
as $\lambda\to\infty$ along $\Lambda_{\infty}$. \end{prop}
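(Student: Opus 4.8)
The plan is to compute $\langle \mathrm{Op}(e_{\zeta,l,m})g_{\lambda,L},g_{\lambda,L}\rangle$ explicitly and show it is small on average over the interlacing sequence, from which equidistribution along $\Lambda_\infty$ will follow. Writing $g_{\lambda,L}(x) = \frac{-1}{8\pi^3\|G_{\lambda,L}\|_2}\sum_{||\xi|^2-\lambda|<L}\frac{e^{i\xi\cdot(x-x_0)}}{|\xi|^2-\lambda}$ and applying the definition of $\mathrm{Op}$ to the symbol $e_{\zeta,l,m}(x,\xi)=Y_{l,m}(\xi/|\xi|)e^{ix\cdot\zeta}$, we get
\[
\left\langle \mathrm{Op}(e_{\zeta,l,m})g_{\lambda,L},g_{\lambda,L}\right\rangle
= \frac{1}{(8\pi^3)^2\|G_{\lambda,L}\|_2^2}\sum_{\substack{||\xi|^2-\lambda|<L\\ ||\eta|^2-\lambda|<L\\ \eta-\xi=\zeta}} \frac{Y_{l,m}(\xi/|\xi|)\, e^{-ix_0\cdot\zeta}}{(|\xi|^2-\lambda)(|\eta|^2-\lambda)}.
\]
First I would treat the main case $\zeta=0$: here $\xi=\eta$, the phase $e^{-ix_0\cdot\zeta}=1$, and the expression collapses to $\frac{1}{(8\pi^3)^2\|G_{\lambda,L}\|_2^2}\sum_{||\xi|^2-\lambda|<L}\frac{Y_{l,m}(\xi/|\xi|)}{(|\xi|^2-\lambda)^2}$. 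Grouping lattice points by their common norm $|\xi|^2=n$, the inner sum over the $r_3(n)$ points on the sphere of radius $\sqrt n$ is, up to normalization, exactly the kind of Weyl sum controlled by Duke's equidistribution theorem: for $n\not\equiv 0,4,7\pmod 8$ and $(l,m)\neq(0,0)$, $\sum_{|\xi|^2=n}Y_{l,m}(\xi/|\xi|) = o(r_3(n))$ — in fact with a power-saving error $\ll_{l,m} n^{1/2-\theta}$ for some $\theta>0$ coming from the effective version of Duke's estimate (equivalently, bounds on the relevant theta-series coefficients / Weyl sums over Heegner-type points). I would then insert this, combined with Siegel's bound $r_3(n)\gg n^{1/2-\varepsilon}$ for the same residue classes, and the lower bound $\|G_{\lambda,L}\|_2^2=\|G_\lambda\|_2^2(1+o(1))\gg\lambda^{1/2-\varepsilon}$ from Section 4.

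The point of restricting to $\Lambda_\infty$ is precisely that for $\lambda\in\Lambda_\infty$ the nearest integer $n_\lambda$ lies in $\mathcal N_{good}$, so $n_\lambda = 4^a n_1$ with $n_1 > n_\lambda^{1/2}$; writing $n_\lambda$ in terms of its core $n_1$ and using $r_3(4^a n_1)=r_3(n_1)$, Siegel's bound applies to $n_1$ and gives $r_3(n_\lambda)=r_3(n_1)\gg n_1^{1/2-\varepsilon}\gg n_\lambda^{1/4-\varepsilon}$ — a genuine, if weaker, lower bound on the number of lattice points on the dominant sphere, and similarly Duke's error for $n_1$ transfers to $n_\lambda$. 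Since all $n$ in the window $||\xi|^2-\lambda|<L=\lambda^\delta$ satisfy $|n-\lambda|\le \lambda^\delta$, the leading contribution to $\|G_{\lambda,L}\|_2^2$ and to the numerator both come from $n=n_\lambda$ (with denominator $(n_\lambda-\lambda)^{-2}\asymp 1$ since $|n_\lambda-\lambda|\le 1.5$), while the contributions of the $O(\lambda^\delta)$ other spheres $n\neq n_\lambda$ have denominators $(n-\lambda)^{-2}\le (|n-n_\lambda|-1.5)^{-2}$ and are controlled by the trivial bound $r_3(n)\ll n^{1/2+\varepsilon}$ together with the convergence of $\sum 1/k^2$; choosing $\delta<1/28$ makes this tail genuinely negligible against the $n_\lambda$-term. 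Assembling: numerator $\ll n_\lambda^{1/2-\theta}+(\text{tail})$, denominator $\gg n_\lambda^{1/4-\varepsilon}$ (from $r_3(n_\lambda)^2$-type normalization via $\|G_{\lambda,L}\|_2^2$), and the ratio is $o(1)$ provided $\theta$ beats the $\varepsilon$-losses — which is exactly the quantitative content one extracts from Duke vs. Siegel, and where the explicit threshold $\delta<1/28$ is calibrated.

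For the remaining cases $\zeta\neq 0$ (with $l,m$ arbitrary), the diagonal $\xi=\eta$ is excluded, so one is looking at pairs of lattice points $\xi,\eta$ on spheres of radius $\approx\sqrt\lambda$ with $\eta-\xi=\zeta$ fixed; as the paper notes, this is governed by the same mechanism used in the proof of Proposition 3.9 of \cite{Yesha} for the configuration-space result — the count of such pairs is $O(\lambda^\varepsilon)$ per pair of spheres by a divisor-type argument (the number of ways to write a fixed vector as a difference of two lattice points of given norms is small), so after dividing by $\|G_{\lambda,L}\|_2^2\gg\lambda^{1/2-\varepsilon}$ this contribution is $o(1)$ unconditionally, without needing Duke and without needing the restriction to $\Lambda_\infty$. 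I would handle this case by quoting that estimate and noting the spherical harmonic factor $Y_{l,m}(\xi/|\xi|)$ is bounded, so it only contributes a constant.

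\emph{Main obstacle.} The crux is the $\zeta=0$ case and specifically making Duke's equidistribution effective enough: Schnirelman-type reasoning or soft equidistribution would only give $o(r_3(n))$ along a density-one set of $n$, which is not enough here because we have already pinned down $n=n_\lambda$ by the interlacing and cannot afford to throw away exceptional $n$. One needs the \emph{power-saving} form of Duke's bound on the Weyl sums $\sum_{|\xi|^2=n}Y_{l,m}(\xi/|\xi|)$, and then to check that the resulting exponent, after the worst-case $r_3(n_\lambda)\gg n_\lambda^{1/4-\varepsilon}$ lower bound forced by a possibly large power of $4$ dividing $n_\lambda$, still yields a net negative exponent; reconciling these exponents, together with bounding the $L=\lambda^\delta$-window tail, is what fixes $\delta<1/28$ and is the delicate bookkeeping of the proof.
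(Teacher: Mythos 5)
Your strategy is the same as the paper's: the same explicit expansion of the matrix element, the same appeal to the proof of Proposition 3.9 of \cite{Yesha} for $\zeta\neq0$ (where indeed neither Duke nor the restriction to $\Lambda_\infty$ is needed), and for $\zeta=0$ the same splitting into the nearest sphere $n_\lambda$ plus a tail, with Duke's power-saving Weyl-sum bound played against Siegel's lower bound for $r_3$ and the restriction to $\Lambda_\infty$ entering through $n_1>n_\lambda^{1/2}$. However, the quantitative assembly of the $\zeta=0$ case --- which you yourself flag as the delicate part --- does not close as written, in three places.

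First, the claim that $(n_\lambda-\lambda)^{-2}\asymp1$ is false: interlacing gives $|n_\lambda-\lambda|\leq1.5$, hence a \emph{lower} bound $(n_\lambda-\lambda)^{-2}\gg1$, but $\lambda$ may be arbitrarily close to $n_\lambda$, so this factor has no upper bound. The correct mechanism is cancellation: since $\left\Vert G_{\lambda,L}\right\Vert_2^2\asymp\sum_n r_3(n)(n-\lambda)^{-2}\geq r_3(n_\lambda)(n_\lambda-\lambda)^{-2}$, the $n_\lambda$-contribution to the quotient is $\ll|W_{l,m}(n_\lambda)|/r_3(n_\lambda)$ with the singular factor cancelled exactly. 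Second, your exponent bookkeeping for that term --- numerator $\ll n_\lambda^{1/2-\theta}$ against denominator $\gg n_\lambda^{1/4-\varepsilon}$ --- yields $n_\lambda^{1/4-\theta+\varepsilon}$, which \emph{diverges}, since Duke's saving is $\theta=1/28<1/4$. You must form the quotient in terms of the odd-part core before converting: $|W_{l,m}(n_\lambda)|/r_3(n_\lambda)=|W_{l,m}(n_1)|/r_3(n_1)\ll n_1^{-1/28+\varepsilon}$, and only then use $n_1>n_\lambda^{1/2}$ to get $\ll\lambda^{-1/56+\varepsilon/2}\to0$; bounding Duke's error upward by $n_\lambda^{1/2-\theta}$ while bounding Siegel downward by $n_\lambda^{1/4-\varepsilon}$ decouples the two and loses the game. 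Third, for the tail $n\neq n_\lambda$ the trivial bound $|W_{l,m}(n)|\leq\|Y_{l,m}\|_\infty r_3(n)\ll n^{1/2+\varepsilon}$ combined with $\sum_k k^{-2}<\infty$ gives only $O(\lambda^{1/2+\varepsilon})$ against a denominator $\gg\lambda^{1/2-\varepsilon}$, i.e. $O(\lambda^{2\varepsilon})$, which is not $o(1)$; Duke's bound $|W_{l,m}(n)|\ll n^{13/28+\varepsilon}$ is needed on every tail sphere, and this is precisely where the hypothesis $\delta<1/28$ is used (the paper sums $\lambda^{\delta}$ terms of size $\lambda^{13/28+\varepsilon}$ and divides by $\lambda^{1/2-\varepsilon}$). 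With these three corrections your argument becomes the paper's proof.
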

\begin{proof}
We have
\begin{gather*}
\left|\left\langle \mbox{Op}\left(e_{\zeta,l,m}\right)G_{\lambda,L},G_{\lambda,L}\right\rangle \right|\\
\asymp\left|\left\langle \sum_{\left|\left|\xi\right|^{2}-\lambda\right|<L}\frac{e^{i\left(x-x_{0}\right)\cdot\xi}}{\left|\xi\right|^{2}-\lambda}e^{ix\cdot\zeta}Y_{l,m}\left(\frac{\xi}{\left|\xi\right|}\right),\sum_{\left|\left|\eta\right|^{2}-\lambda\right|<L}\frac{e^{i\left(x-x_{0}\right)\cdot\eta}}{\left|\eta\right|^{2}-\lambda}\right\rangle \right|\\
\asymp\left|\sum_{\begin{subarray}{c}
\left|\left|\xi\right|^{2}-\lambda\right|<L\\
\left|\left|\xi+\zeta\right|^{2}-\lambda\right|<L
\end{subarray}}\frac{1}{\left(\left|\xi\right|^{2}-\lambda\right)\left(\left|\xi+\zeta\right|^{2}-\lambda\right)}Y_{l,m}\left(\frac{\xi}{\left|\xi\right|}\right)\right|.
\end{gather*}
First, assume that $\zeta\neq0:$

The functions $Y_{l,m}$ are bounded on $S^{2}$, so
\begin{align*}
\left|\left\langle \mbox{Op}\left(e_{\zeta,l,m}\right)G_{\lambda,L},G_{\lambda,L}\right\rangle \right| & \ll\sum_{\begin{subarray}{c}
\left|\left|\xi\right|^{2}-\lambda\right|<L\\
\left|\left|\xi+\zeta\right|^{2}-\lambda\right|<L
\end{subarray}}\frac{1}{\left|\left|\xi\right|^{2}-\lambda\right|\left|\left|\xi+\zeta\right|^{2}-\lambda\right|}
\end{align*}
and therefore
\[
\left|\left\langle \mbox{Op}\left(e_{\zeta,l,m}\right)g_{\lambda,L},g_{\lambda,L}\right\rangle \right|\ll\frac{\sum\limits _{\begin{subarray}{c}
\left|\left|\xi\right|^{2}-\lambda\right|<L\\
\left|\left|\xi+\zeta\right|^{2}-\lambda\right|<L
\end{subarray}}\frac{1}{\left|\left|\xi\right|^{2}-\lambda\right|\left|\left|\xi+\zeta\right|^{2}-\lambda\right|}}{\left\Vert G_{\lambda,L}\right\Vert _{2}^{2}}\to0
\]
as $\lambda\to\infty$ by the proof of Proposition 3.9 in \cite{Yesha}.

Assume now that $\zeta=0$.

We have
\begin{align*}
\left|\left\langle \mbox{Op}\left(e_{0,l,m}\right)G_{\lambda,L},G_{\lambda,L}\right\rangle \right| & \asymp\left|\sum_{\left|\left|\xi\right|^{2}-\lambda\right|<L}\frac{Y_{l,m}\left(\frac{\xi}{\left|\xi\right|}\right)}{\left(\left|\xi\right|^{2}-\lambda\right)^{2}}\right|\\
 & =\left|\sum_{\left|n-\lambda\right|<L}\frac{W_{l,m}\left(n\right)}{\left(n-\lambda\right)^{2}}\right|
\end{align*}
where
\[
W_{l,m}\left(n\right)=\sum\limits _{\left|\xi\right|^{2}=n}Y_{l,m}\left(\frac{\xi}{\left|\xi\right|}\right).
\]
If we write $n=4^{a}n_{1}$ with $4\nmid n_{1}$, then by Duke's estimate
(see \cite{Duke,Duke2})
\[
\left|W_{l,m}\left(n\right)\right|=\left|W_{l,m}\left(n_{1}\right)\right|\ll n_{1}^{13/28+\varepsilon}\leq n^{13/28+\varepsilon}
\]
and by Siegel's theorem \cite{Siegel} $r_{3}\left(n\right)=r_{3}\left(n_{1}\right)\gg n_{1}^{1/2-\varepsilon},$
so
\[
\frac{\left|W_{l,m}\left(n\right)\right|}{r_{3}\left(n\right)}\ll n_{1}^{-1/28+\varepsilon}.
\]
For $\lambda\in\Lambda_{\infty}$, recall that $n_{\lambda}$ is the
closest element in $\mathcal{N}_{3}$ to $\lambda$ (and if there
are two elements with the same distance take the smallest of them).
From the definition of $\Lambda_{\infty}$ we know that $n_{\lambda}\in\mathcal{N}_{good}$.

Write
\begin{align*}
\sum_{\left|n-\lambda\right|<L}\frac{W_{l,m}\left(n\right)}{\left(n-\lambda\right)^{2}} & =\sum_{\begin{subarray}{c}
\left|n-\lambda\right|<L\\
n\neq n_{\lambda}
\end{subarray}}\frac{W_{l,m}\left(n\right)}{\left(n-\lambda\right)^{2}}+\frac{W_{l,m}\left(n_{\lambda}\right)}{\left(n_{\lambda}-\lambda\right)^{2}}.
\end{align*}
Since for $n\neq n_{\lambda}$ we have $\left|n-\lambda\right|\geq0.5$:
\[
\left|\sum_{\begin{subarray}{c}
\left|n-\lambda\right|<L\\
n\neq n_{\lambda}
\end{subarray}}\frac{W_{l,m}\left(n\right)}{\left(n-\lambda\right)^{2}}\right|\ll\sum_{\left|n-\lambda\right|<L}n^{13/28+\varepsilon}\ll\lambda^{13/28+\delta+\varepsilon}
\]
so
\[
\frac{\left|\sum\limits _{\begin{subarray}{c}
\left|n-\lambda\right|<L\\
n\neq n_{\lambda}
\end{subarray}}\frac{W_{l,m}\left(n\right)}{\left(n-\lambda\right)^{2}}\right|}{\left\Vert G_{\lambda,L}\right\Vert _{2}^{2}}\ll\frac{\lambda^{13/28+\delta+\varepsilon}}{\left\Vert G_{\lambda}\right\Vert _{2}^{2}}\ll\lambda^{-1/28+\delta+2\varepsilon}
\]
which tends to zero for $\varepsilon>0$ small enough, since $\delta<\frac{1}{28}$.
For the other term, writing $n_{\lambda}=4^{a}n_{1}$ with $4\nmid n_{1}$,
we have
\begin{align*}
\frac{\left|\frac{W_{l,m}\left(n_{\lambda}\right)}{\left(n_{\lambda}-\lambda\right)^{2}}\right|}{\left\Vert G_{\lambda,L}\right\Vert _{2}^{2}} & \asymp\frac{\frac{\left|W_{l,m}\left(n_{\lambda}\right)\right|}{\left(n_{\lambda}-\lambda\right)^{2}}}{\left\Vert G_{\lambda}\right\Vert _{2}^{2}}\\
 & \asymp\frac{\frac{\left|W_{l,m}\left(n_{\lambda}\right)\right|}{\left(n_{\lambda}-\lambda\right)^{2}}}{\sum\limits _{n=0}^{\infty}\frac{r_{3}\left(n\right)}{\left(n-\lambda\right)^{2}}}\\
 & \ll\frac{\left|W_{l,m}\left(n_{\lambda}\right)\right|}{r_{3}\left(n_{\lambda}\right)}\ll n_{1}^{-1/28+\varepsilon}
\end{align*}
and since $n_{\lambda}\in\mathcal{N}_{good}$, we have
\[
n_{1}^{-1/28+\varepsilon}\leq n_{\lambda}^{-1/56+\frac{\varepsilon}{2}}\ll\lambda^{-1/56+\frac{\varepsilon}{2}}
\]
so this term also tends to zero as $\lambda\to\infty$ along $\Lambda_{\infty}$. \end{proof}

\end{document}